\newtheorem{theorem}{Theorem}[section]
\newtheorem{proposition}[theorem]{Proposition}
\newtheorem{lemma}[theorem]{Lemma}
\newtheorem{corollary}[theorem]{Corollary}
\newtheorem{definition}[theorem]{Definition}
\newtheorem{remark}[theorem]{Remark}
\def\Im{\operatorname{Im}}
\def\Ker{\operatorname{Ker}}
\def\Ann{\operatorname{Ann}}
\def\Sign{\operatorname{Sign}}
\def\sign{\operatorname{sign}}
\def\Coker{\operatorname{Coker}}
\def\Int{\operatorname{Int}}
\def\Diff{\operatorname{Diff}}
\begin{document}
\title{A class function on the mapping class group of an orientable surface and the Meyer cocycle}
\author{Masatoshi Sato}
\date{}
\maketitle
\begin{abstract}
In this paper we define a $\mathbf{QP}^1$-valued class function on the mapping class group $\mathcal{M}_{g,2}$ of a surface $\Sigma_{g,2}$ of genus $g$ with two boundary components. Let $E$ be a $\Sigma_{g,2}$ bundle over a pair of pants $P$. Gluing to $E$ the product of an annulus and $P$ along the boundaries of each fiber, we obtain a closed surface bundle over $P$. We have another closed surface bundle by gluing to $E$ the product of $P$ and two disks.

The sign of our class function cobounds the 2-cocycle on $\mathcal{M}_{g,2}$ defined by the difference of the signature of these two surface bundles over $P$.
\end{abstract}
\tableofcontents
\newpage
\section{Introduction}
Let $\Sigma_{g,r}$ be a compact oriented surface of genus $g$ with $r$ boundary components. The mapping class group $\mathcal{M}_{g,r}$ is $\pi_0\Diff_+(\Sigma_{g,r},\partial\Sigma_{g,r})$ where $\Diff_+(\Sigma_{g,r},\partial\Sigma_{g,r})$ is the group of orientation preserving diffeomorphisms of $\Sigma_{g,r}$  which restrict to the identity on the boundary $\partial\Sigma_{g,r}$. We simply denote $\Sigma_g:=\Sigma_{g,0}$ and $\mathcal{M}_g:=\mathcal{M}_{g,0}$. Harer\cite{harer1985shm} proved that
\[
H^2(\mathcal{M}_{g,r};\mathbf{Z})\cong \mathbf{Z} \hspace{1em}g\ge 3,\ r\ge 0, 
\]
see also Korkmaz, Stipsicz\cite{korkmaz2003shg}.
\begin{figure}[h]
  \begin{center}
    \includegraphics{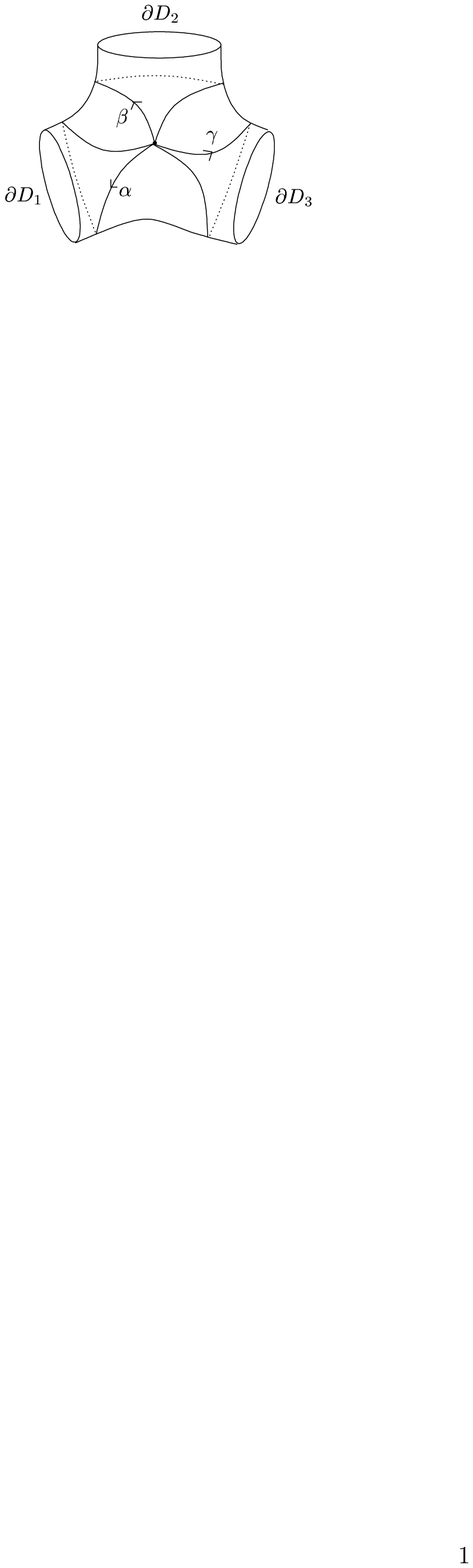}
  \end{center}
  \caption{}
  \label{p.eps}
\end{figure}
Meyer\cite{meyer1973sf} defined a cocycle $\tau_g\in Z^2(\mathcal{M}_g; \mathbf{Z})$ ($g\ge0$) called the Meyer cocycle which represents four times generator of the second cohomology class when $g\ge3$. Let $P:=S^2-\amalg_{i=1}^3 \Int D_i$ where $D_i\subset S^2$ is a disk, $\Int D_i$ its interior in $S^2$, and $\alpha, \beta, \gamma \in \pi_1(P)$ be the homotopy classes as shown in Figure \ref{p.eps}. We consider a  $\Sigma_{g,r}$ bundle $E_{g,r}^{\varphi, \psi}$ on the pair of pants $P$ which has monodromies $\varphi$, $\psi$, $(\psi\varphi)^{-1}\in \mathcal{M}_{g,r}$ along $\alpha$, $\beta$, $\gamma\in \pi_1(P)$. The diffeomorphism type of $E_{g,r}^{\varphi, \psi}$ does not depend on the choice of representatives in the mapping classes $\varphi$ and $\psi$. The Meyer cocycle is defined by
\[
\begin{array}{cccccc}
\tau_g:&\mathcal{M}_{g}&\times&\mathcal{M}_{g}&\to&\mathbf{Z}\ ,\\
&(\ \varphi&,&\psi\ )&\mapsto&\Sign E_{g}^{\varphi,\psi}
\end{array}
\]
where $\Sign E_{g}^{\varphi,\psi}$ is the signature of the compact 4-manifold $E_{g}^{\varphi,\psi}$. For $k>0$, it is known as Novikov additivity that when two compact oriented $4k$-manifolds are glued by an orientation reversing diffeomorphism of their boundaries, the signature of their union is the sum of their signature.  When a pants decomposition of a closed 2-manifold is given, the signature of a $\Sigma_g$ bundle on the 2-manifold is the sum of the signature of the $\sigma_g$ bundles restricted to each pair of pants. Therefore, it is important to study the Meyer cocycle to calculate the signature of compact 4-manifolds. For $g=1, 2$ the Meyer cocycle $\tau_g$ is a coboundary, and the cobounding function of this cocycle is calculated by several authors, for instance, Meyer\cite{meyer1973sf}, Atiyah\cite{atiyah1987ldeta}, Kasagawa\cite{kasagawa2000fmc}, Iida\cite{iida2004ale}. The Meyer cocycle is not a coboundary if genus $g\ge3$, but the cocycle can be a coboundary when it is restricted to a certain subgroup, and calculated by Endo\cite{endo2000mss}, Morifuji\cite{morifuji2003msf}. 

Let $I$ be the unit interval $[0,1]\subset \mathbf{R}$. By sewing a pair of disks onto the surface $\Sigma_{g,2}$ along the boundary, we have $\Sigma_g$. For $h\in\Diff_+(\Sigma_{g,2},\partial\Sigma_{g,2})$, if we extend $h$ by the identity on the pair of disks, we have a self-diffeomorphism of $\Sigma_g$. we denote it $h\cup id_{\amalg_{i=1}^2 D^2}$. By sewing an annulus $S^1\times I$ onto the surface $\Sigma_{g,2}$ along the boundary, we have $\Sigma_{g+1}$. In the same way, if we extend $h\in\Diff_+(\Sigma_{g,2},\partial\Sigma_{g,2})$ by the identity on the annulus, we have a self-diffeomorphism $h\cup id_{S^1\times I}$.

Define the induced homomorphism on the mapping class group by
\[
\begin{array}{cccc}
\theta:&\mathcal{M}_{g,2}&\to&\mathcal{M}_g\\
&[h]&\mapsto&[h\cup id_{\amalg_{i=1}^2 D^2}]
\end{array}
\]
and 
\[
\begin{array}{cccc}
\eta:&\mathcal{M}_{g,2}&\to&\mathcal{M}_{g+1,0}.\\
&[h]&\mapsto&[h\cup id_{S^1\times I}]
\end{array}
\]
Harer\cite{harer1983shg}\cite{harer1985shm} shows that $\theta$ and $\eta$ induce an isomorphism on the second homology classes when genus $g\ge5$, so that $\tilde{\tau}_g=\eta^*\tau_{g+1}-\theta^*\tau_g$ is a coboundary. Powell\cite{powell1978ttm} proved that the first cohomology group $H_1(\mathcal{M}_{g,r}; \mathbf{Z})$ is trivial for $g\ge 3$ and $r\ge 0$, so by the universal coefficient theorem, it follows that the cobounding function of $\tilde{\tau}_g$ is unique.

In this paper we define a $\mathbf{QP}^1$-valued class function $m$ on the mapping class group $\mathcal{M}_{g,2}$ in an explicit way by using information of the first homology group of a mapping torus of $[h]\in \mathcal{M}_{g,2}$, and prove that the sign of the function $m$ cobounds the cocycle $\tilde{\tau}_g=\eta^*\tau_{g+1}-\theta^*\tau_g$. Especially it turns out that the cocycle $\tilde{\tau}_g$ is coboundary for any $g\ge0$.

In section 1, we construct a class function $m$, prove some properties of this function, and calculate the image of the function. In section 2, we prove that the sign of this function cobounds the difference $\tilde{\tau}_g=\eta^*\tau_{g+1}-\theta^*\tau_g$. By the definition of the Meyer cocycle $\tau_g$, $\tilde{\tau}_g(\varphi, \psi)$ is just the difference $\Sign E_{g+1}^{\eta(\varphi),\eta(\psi)}-\Sign E_g^{\theta(\varphi),\theta(\psi)}$, so that we calculate the difference by using the sign of the function $m$. Moreover we compute the other differences of signature $\Sign(E_{g,2}^{\varphi, \psi})-\Sign(E_g^{\theta(\varphi),\theta(\psi)})$ and $\Sign(E_{g+1}^{\eta(\varphi),\eta(\psi)})-\Sign(E_{g,2}^{\varphi, \psi})$ by the function $m$. 
\section{Class function $m:\mathcal{M}_{g,2}\to \mathbf{QP}^1$}
In this section we define the class function on the mapping class group $\mathcal{M}_{g,2}$ stated in Introduction and describe some properties of the function including the nontriviality. 

For $[p:q]$, $[r:s]\in\mathbf{QP}^1$, we define an addition in $\mathbf{QP}^1$ by
\[
[p:q]+[r:s]=
\begin{cases}
[pr:ps+qr], &\textrm{if} \quad [p:q]\ne[0:1] \textrm{ or } [r:s]\ne[0:1]\\
[0:1], &\textrm{if}\quad [p:q]=[r:s]=[0:1].
\end{cases}
\]
The projective line $\mathbf{QP}^1$ forms an additive monoid under this operation with $[1:0]$ the zero element. 

In this section, all (co)homology groups is with $\mathbf{Q}$ coefficients. 
\subsection{Construction of the class function}\label{subsection2.1}
Before constructing the function, we prepare a fact about homology groups of compact 3-manifolds. Let $Y$ be a compact oriented 3-manifold with boundary $\partial Y$ and $i:\partial Y\hookrightarrow Y$ the inclusion map. Consider the commutative diagram
\[
\begin{CD}
H^1(Y) @>i^*>>H^1(\partial Y) @>\delta^*>>H^2(Y,\partial Y)\\
@VV\cap [Y]V @VV\cap [\partial Y]V @VV\cap [Y]V\\
H_2(Y,\partial Y) @>\partial_*>>H_1(\partial Y) @>i_*>>H_1(Y),
\end{CD}
\]
where the upper and lower rows are the exact sequences of a pair $(Y, \partial Y)$, and the vertical maps are the cap products with the fundamental classes of $Y$ and $\partial Y$.
By the diagram and Poincar\'{e} Duality, it follows that the image of $i^*$ is just its own annihilator with respect to the cup product of $H^1(\partial Y)$
\[
\Im i^*=\Ann (\Im i^*).
\]
In particular, we have
\[
\dim\Ker i_*=\dim\Im i^*=\frac{1}{2}\dim H_1(\partial Y).
\]
We define the mapping torus of $\varphi=[h]\in\mathcal{M}_{g,r}$ by 
\[
X^\varphi:= \Sigma_{g,r}\times I/\sim, \quad (x,1)\sim (h(x), 0), 
\]
and
$\pi: X^\varphi\to I/\partial I=S^1$ by the projection $\pi([x,t])=[t]$, where $[x, t]\in X^\varphi$ is the equivalent class of $(x, t)\in\Sigma_{g,r}\times I$, and $[t]\in I/\partial I=S^1$ the equivalent class of $t\in I$. 

The diffeomorphism type of the mapping torus $X^\varphi$ does not depend on the choice of the representative $h$. We fix the orientation on $X^\varphi$ given by the product orientation on $\Sigma_{g,r}\times I$. Let $i_\varphi:\partial X^\varphi\hookrightarrow X^\varphi$ be the inclusion map. In this subsection we denote $\Sigma:=\Sigma_{g,2}$, and if we fix $\varphi\in\mathcal{M}_{g,2}$, then we write simply $X:=X^\varphi$ and $i:=i_\varphi$. Let $S_1$ and $S_2$ be the two boundary components of $\Sigma$, and $[S_k]$\ $(k=1,2)$ the image under the inclusion homomorphism $H_1(S_k)\to H_1(\Sigma)$ of the fundamental homology class. 

We consider $\Sigma$ as a subspace of $X$ by the embedding $\iota :\Sigma\hookrightarrow X$  $x\mapsto [x, 0]$. We choose points $p_1\in S_1$, $p_2\in S_2$, and $p\in S^1$, and orientation-preserving homeomorphisms $\iota_1: S^1\to S_1$ and $\iota_2: S^1\to S_2$. We define singular cochains $f_k: I\to (S_1\amalg S_2)\times S^1=\partial X$  $(k=1,2,3,4)$ by 
\[
f_1(t)=(\iota_1(t), p),\quad f_2(t)=(\iota_2(t), p),\quad f_3(t)=(p_1, t),\ \text{and} \quad f_4(t)=(p_2, t),\ \text{respectively}.
\]
Let $e_k\in H_1(\partial X)$ be the homology class of $f_k$  $(k=1,2,3,4)$. Then the set $\{e_1, e_2, e_3, e_4\}$ forms a basis for $H_1(\partial X)$. 

Now we describe the kernel of the homomorphism $i_*: H_1(\partial X)\to H_1(X)$.
Since $e_1$ and $e_2$ lie in the kernel of $(\pi|_{\partial X})_*$ and $\pi_*(e_3)=\pi_*(e_4)=[S^1]\in H_1(S^1)$, we have
\[\Ker\,i_* \subset \Ker\,(\pi_* i_*)=\mathbf{Q}e_1\oplus\mathbf{Q}e_2\oplus\mathbf{Q}(e_3-e_4). \]
By the definition of the map $f_k$, $(i\circ f_k)_*[S^1]=\iota_*[S_k]$, and so $i_*(e_1+e_2)=\iota_*([S_1]+[S_2])\in H_1(X)$. Since $S_1\cup S_2$ is the boundary of $\Sigma$, we have $[S_1]+[S_2]=0\in H_1(\Sigma)$. Hence
\[\mathbf{Q}(e_1+e_2)\subset \Ker\,i_*.\]
As we saw at the beginning of this subsection, $\dim\Ker\,i_*=\frac{1}{2}\dim H_1(\partial X)=2$.  It follows that $\Ker i_*=\mathbf{Q}(e_1+e_2)\oplus\mathbf{Q}(p(e_3-e_4)+qe_1)$ for some $p$, $q\in \mathbf{Q}$. Now we can define a class function. 
\begin{definition}
For $\varphi\in\mathcal{M}_{g,2}$, we take $p, q\in\mathbf{Q}$ such that $\Ker\, i_{\varphi*} =\mathbf{Q}(e_1+e_2)\oplus\mathbf{Q}(p(e_3-e_4)+qe_1)$. 

We define $m:\mathcal{M}_{g,2}\to\mathbf{QP^1}$ by $m(\varphi)= [p:q]$. 
\end{definition}
\begin{lemma}
For $\varphi, \psi\in\mathcal{M}_{g,2}$, 
\begin{gather*}
m(\psi\varphi\psi^{-1})=m(\varphi).
\end{gather*}
\end{lemma}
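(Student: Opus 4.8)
The plan is to show that the mapping tori $X^{\psi\varphi\psi^{-1}}$ and $X^\varphi$ are diffeomorphic by an orientation-preserving diffeomorphism that respects the boundary structure, and then to track how this diffeomorphism acts on the distinguished basis $\{e_1, e_2, e_3, e_4\}$ of $H_1(\partial X)$. Since $m(\varphi)$ is defined entirely in terms of $\Ker i_{\varphi*}$ expressed in this basis, it suffices to verify that the induced isomorphism on $H_1(\partial X)$ carries $\Ker i_{\varphi*}$ to $\Ker i_{(\psi\varphi\psi^{-1})*}$ and, crucially, that it sends the normalized generator $p(e_3 - e_4) + q e_1$ to a generator with the same projective coordinates $[p:q]$.

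First I would construct the diffeomorphism. Let $h$ and $g$ be representatives of $\varphi$ and $\psi$ respectively. The map $\Sigma \times I \to \Sigma \times I$ given by $(x,t) \mapsto (g(x), t)$ descends to a diffeomorphism $G: X^\varphi \to X^{\psi\varphi\psi^{-1}}$, because the gluing relation $(x,1) \sim (h(x),0)$ is carried to $(g(x),1) \sim (g h(x), 0) = (g h g^{-1}(g(x)), 0)$, which is exactly the gluing relation for the monodromy $\psi\varphi\psi^{-1}$. This $G$ is orientation-preserving (it is $g \times \mathrm{id}$ at the level of the product, and $g$ preserves orientation), and it restricts to a diffeomorphism of boundaries $\partial X^\varphi \to \partial X^{\psi\varphi\psi^{-1}}$, so by naturality it intertwines the two inclusion homomorphisms $i_*$ and carries $\Ker i_{\varphi*}$ isomorphically onto $\Ker i_{(\psi\varphi\psi^{-1})*}$.

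The substantive step, which I expect to be the main obstacle, is the bookkeeping on the boundary basis. The diffeomorphism $g$ restricts to the identity on $\partial\Sigma = S_1 \amalg S_2$ (since representatives fix the boundary pointwise), so on the boundary torus $G$ acts as the identity in the $S^1$-mapping-torus direction and fixes the chosen points $p_1, p_2$. Consequently $G_*$ fixes each of $e_1, e_2, e_3, e_4$: the loops $f_3, f_4$ in the circle direction over $p_1, p_2$ are preserved because those points are fixed, and $f_1, f_2$ are preserved because $g$ is the identity on $S_1, S_2$. Thus $G_*$ is the identity on $H_1(\partial X)$ under the identification of bases, so it carries $\mathbf{Q}(e_1+e_2)\oplus\mathbf{Q}(p(e_3-e_4)+qe_1)$ to the subspace with the \emph{same} expression. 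By uniqueness of the projective coordinate extracted from $\Ker i_{(\psi\varphi\psi^{-1})*}$, we conclude $m(\psi\varphi\psi^{-1}) = [p:q] = m(\varphi)$. The one point demanding care is confirming that the orientation conventions on the two mapping tori, and the choices of basepoints and parametrizations $\iota_1, \iota_2$, are genuinely compatible under $G$, so that no sign or reordering of $e_3$ and $e_4$ intervenes; this follows precisely because $g$ fixes the boundary pointwise and preserves orientation.
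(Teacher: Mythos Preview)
Your proposal is correct and follows essentially the same approach as the paper: both construct the diffeomorphism $X^\varphi \to X^{\psi\varphi\psi^{-1}}$ given by $(x,t)\mapsto(g(x),t)$ and use the resulting commutative square to identify the two kernels. Your treatment is in fact more explicit than the paper's, which simply asserts the conclusion from the diagram without spelling out that $G_*$ fixes the basis $\{e_1,e_2,e_3,e_4\}$; your observation that $g$ restricts to the identity on $\partial\Sigma$ is exactly what justifies that step.
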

\begin{proof}
Define $\Psi: X^\varphi\to X^{\psi\varphi\psi^{-1}}$ by $\Psi(x,t)=(\psi(x),t)$.
Then the following diagram  commutes 
\[
\begin{CD}
H_1(\partial X^\varphi) @>i_{\varphi*}>>H_1(X^\varphi)  \\
@VV\Psi_* V @VV\Psi_* V \\
H_1(\partial X^{\psi\varphi\psi^{-1}}) @>i_{\psi\varphi\psi^{-1}}*>> H_1(X^{\psi\varphi\psi^{-1}}).
\end{CD}
\]
We can see from the diagram, $\Psi_*$ gives the natural isomorphism between $\Ker(H_1(\partial X^\varphi)\to H_1(X^\varphi))$ and $\Ker(H_1(\partial X^{\psi\varphi\psi^{-1}})\to H_1(X^{\psi\varphi\psi^{-1}}))$. Hence we have $m(\psi\varphi\psi^{-1})=m(\varphi)$. 
\end{proof}
\subsection{Some properties and the nontriviality of the class function} 
By the Serre spectral sequence, we have the exact sequence
\[
\begin{CD}
0@>>>\Coker(\varphi_*-1)@>\iota_*>>H_1(X)@>\pi_*>>H_1(S^1)@>>>0,
\end{CD}
\]
where $\Coker(\varphi_*-1)$ is the cokernel of the homomorphism $\varphi_*-1:H_1(\Sigma)\to H_1(\Sigma)$.

Then we have a unique homomorphism $j_\varphi: \mathbf{Q}e_1\oplus\mathbf{Q}e_2\oplus\mathbf{Q}(e_3-e_4) \to \Coker(\varphi_*-1)$ such that the diagram with exact rows 
\[
\begin{CD}
0 @>>> \mathbf{Q}e_1\oplus\mathbf{Q}e_2\oplus\mathbf{Q} (e_3-e_4) @>>> H_1(\partial X) @>\pi_*>> H_1(S^1) @>>> 0\\
@.@VVj_\varphi V @VVi_*V @| @.\\
0 @>>> \Coker(\varphi_*-1) @>\iota_*>>H_1(X) @>\pi_*>>H_1(S^1)@>>>0
\end{CD}
\]
commutes. By the diagram, we have
\[\Ker i_*=\Ker j_\varphi\, ,\ \text{and}\]
\[
j_\varphi(e_1)=-j_\varphi(e_2)=[S_1]\in \Coker(\varphi_*-1).
\]
Now we introduce a cochain $\omega_l\in C^1(\mathcal{M}_{g,2}; H_1(\Sigma))$ defined in \cite{kawazumi1997gmm}. On the fiber $\Sigma=\pi^{-1}(0)\subset X$, pick a path $l$ such that  $l(0)\in S_2$ and $l(1)\in S_1$. Define $\omega_l$ by
\[\omega_l(\varphi):=\varphi(l)-l\in H_1(\Sigma).\]
Then we have
\begin{lemma}
\[
j_\varphi(e_3-e_4)=[\omega_l(\varphi)]\in \Coker(\varphi_*-1).
\]
\end{lemma}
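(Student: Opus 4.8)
The plan is to exploit the fact that $j_\varphi$ is characterized by the relation $\iota_* \circ j_\varphi = i_*$ on the subspace $\mathbf{Q}e_1 \oplus \mathbf{Q}e_2 \oplus \mathbf{Q}(e_3 - e_4)$, together with the injectivity of $\iota_* \colon \Coker(\varphi_*-1) \to H_1(X)$ coming from the bottom exact sequence. Thus it suffices to compute the class $i_*(e_3 - e_4) \in H_1(X)$ and to recognize it as $\iota_*[\omega_l(\varphi)]$, that is, as the image of $\varphi(l) - l \in H_1(\Sigma)$ under the composite $H_1(\Sigma) \twoheadrightarrow \Coker(\varphi_*-1) \xrightarrow{\iota_*} H_1(X)$; injectivity of the last map then forces the asserted equality already in $\Coker(\varphi_*-1)$.

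To produce this class I would sweep the path $l$ around the mapping-torus direction. Concretely, define a singular $2$-chain $G \colon I \times I \to X$ by $G(s,t) = [l(s), t]$, using the path $s \mapsto l(s)$ from $l(0) \in S_2$ to $l(1) \in S_1$ in the first factor and the mapping-torus coordinate in the second. Reading off the four edges of the square with the standard orientation, the vertical edges $s=1$ and $s=0$ are exactly the loops $f_3(t) = [p_1, t]$ and $f_4(t) = [p_2, t]$, where $l(1)=p_1$ and $l(0)=p_2$; these are genuine loops precisely because $h$ fixes the boundary, so $h(p_1)=p_1$ and $h(p_2)=p_2$. The bottom edge $t=0$ is the path $\iota \circ l$ in the fiber $\pi^{-1}(0)=\Sigma$, while the top edge $t=1$ is $[l(s),1] = [h(l(s)),0] = \iota(h(l(s)))$, i.e. the path $\iota \circ \varphi(l)$. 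Since $h|_{\partial\Sigma}=\mathrm{id}$, the endpoints of $\iota \circ l$ and $\iota \circ \varphi(l)$ agree, so $\varphi(l)-l$ is a genuine $1$-cycle in $\Sigma$ representing $\omega_l(\varphi)$.

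Computing $\partial G$ and rearranging—the horizontal edges combine to $\iota \circ (\varphi(l)-l)$ and the vertical edges give $f_3-f_4$—one obtains, up to routine orientation bookkeeping,
\[
i_*(e_3 - e_4) = [f_3 - f_4] = \iota_*[\varphi(l) - l] = \iota_*[\omega_l(\varphi)] \in H_1(X),
\]
because $\partial G$ vanishes in homology. Injectivity of $\iota_*$ then yields $j_\varphi(e_3-e_4) = [\omega_l(\varphi)]$ in $\Coker(\varphi_*-1)$, as claimed. The only delicate point is the sign and orientation accounting in $\partial G$: one must fix conventions so that the vertical edges appear as $+f_3 - f_4$ and the horizontal edges assemble into $\iota \circ (\varphi(l)-l)$ with matching sign. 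The four corner terms cancel exactly because $h$ is the identity near $\partial\Sigma$, which is also what makes $G$ descend to a well-defined chain in the quotient $X$ and closes everything up.
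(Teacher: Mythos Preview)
Your argument is correct and essentially identical to the paper's: you define the same $2$-chain $G(s,t)=[l(s),t]$ (the paper calls it $L$), read off its boundary to obtain $i_*(e_3-e_4)=\iota_*[\varphi(l)-l]$ in $H_1(X)$, and conclude by injectivity of $\iota_*$. Your write-up is in fact more careful about why the vertical edges represent $e_3,e_4$ (choosing $l$ with $l(0)=p_2$, $l(1)=p_1$, which is harmless since only homology classes matter) and why $h|_{\partial\Sigma}=\mathrm{id}$ makes everything close up; the remark about ``corner terms'' is unnecessary, since the boundary of the square is simply the signed sum of its four edges.
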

\begin{proof}
Define a 2-chain $L: I\times I\to X$ by $L(s, t)=[l(s),t]$. Its boundary is given  by $-i_*(e_3)+\varphi(l)+i_*(e_4)-l\in B_1(X)$. Hence,
\[
i_*(e_3-e_4)=\iota_*([\varphi(l)-l])\in H_1(X)
\]
Since $\iota_*$ is injective, the lemma follows.
\end{proof}
From the lemma, we see the homolopy class $[\omega_l(\varphi)]\in \Coker(\varphi_*-1)$ is independent of the choice of the path $l$. If $\omega_l(\varphi)=0$, then $j_\varphi(e_3-e_4)=0$.  
\begin{remark}
If there exists a path $l$ from a point in $S_2$ to a point in $S_1$ which has no common point with the support of a representative of $\varphi\in\mathcal{M}_{g,2}$, then $m(\varphi)=[1:0]$. In particular, $m(id)=[1:0]$, the zero element of the monoid $\mathbf{QP}^1$.
\end{remark}
At the beginning of this section, we defined the commutative monoid structure on $\mathbf{QP^1}$. So integral multiples of $m(\varphi)$ are well-defined. 
\begin{proposition}
If $\varphi\in\mathcal{M}_{g,2}$ and $k\in \mathbf{Z}$, then
\[m(\varphi^k)=km(\varphi).\]
\end{proposition}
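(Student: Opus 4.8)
The plan is to characterize $m(\varphi)$ by a single linear relation in $\Coker(\varphi_*-1)$ and then transport that relation along the power map. By the construction of $m$ together with the two preceding lemmas we have $\Ker i_{\varphi*}=\Ker j_\varphi$, and since $j_\varphi(e_1)=-j_\varphi(e_2)=[S_1]$ and $j_\varphi(e_3-e_4)=[\omega_l(\varphi)]$, the generator $p(e_3-e_4)+qe_1$ of the complementary line in $\Ker i_{\varphi*}$ gives that $m(\varphi)=[p:q]$ is exactly a nonzero projective solution of
\[
p\,[\omega_l(\varphi)]+q\,[S_1]=0\in\Coker(\varphi_*-1).
\]
Lifting to $H_1(\Sigma)$, this says there is a $\xi\in H_1(\Sigma)$ with $p\,\omega_l(\varphi)+q\,[S_1]=(\varphi_*-1)\xi$. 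The dimension count $\dim\Ker i_*=2$ from the start of the subsection forces the space of such solutions to be one-dimensional, so $[p:q]$ is unique.

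First I would record the cocycle identity for $\omega_l$. Straight from $\omega_l(\varphi)=\varphi(l)-l$ one computes $\omega_l(\psi\varphi)=\psi_*\omega_l(\varphi)+\omega_l(\psi)$, and iterating gives $\omega_l(\varphi^k)=N_k\,\omega_l(\varphi)$, where $N_k:=\sum_{j=0}^{k-1}\varphi_*^j$ for $k\ge 1$ (with the analogous Laurent expression for $k<0$). Two algebraic facts about $N_k$ drive the argument: the telescoping factorization $\varphi_*^k-1=(\varphi_*-1)N_k$, and — because $\varphi$ restricts to the identity on $\partial\Sigma$, so that $\varphi_*[S_1]=[S_1]$ — the evaluation $N_k[S_1]=k\,[S_1]$.

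The core step is to push the lifted relation through $N_k$. Applying $N_k$ to $p\,\omega_l(\varphi)+q\,[S_1]=(\varphi_*-1)\xi$ and using the three identities above yields
\[
p\,\omega_l(\varphi^k)+qk\,[S_1]=N_k(\varphi_*-1)\xi=(\varphi_*^k-1)\xi\in\Im(\varphi_*^k-1),
\]
hence $p\,[\omega_l(\varphi^k)]+qk\,[S_1]=0$ in $\Coker(\varphi_*^k-1)$. For $k\ne 0$ the pair $(p,qk)$ is nonzero (if $p=0$ then $m(\varphi)=[0:1]$ forces $q\ne 0$), so it spans the one-dimensional space of solutions that defines $m(\varphi^k)$, giving $m(\varphi^k)=[p:qk]$. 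It then remains to match this with the monoid arithmetic $k[p:q]=[p:kq]$: writing $[p:q]=[1:q/p]$ when $p\ne 0$ reduces the identity to the additive group $(\mathbf{Q},+)$, while $p=0$ gives $m(\varphi)=[0:1]$ and $m(\varphi^k)=[0:1]$ for all $k\ne 0$, matching the absorbing behaviour of $[0:1]$; the case $k=0$ is the Remark, since $m(\mathrm{id})=[1:0]$ is the zero of the monoid.

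I expect the main obstacle to be the transfer of the defining relation between the two distinct cokernels $\Coker(\varphi_*-1)$ and $\Coker(\varphi_*^k-1)$: one must verify that applying $N_k$ genuinely lands the left-hand side in $\Im(\varphi_*^k-1)$ (this is precisely where the factorization $\varphi_*^k-1=(\varphi_*-1)N_k$ is essential) and that the output $(p,qk)$ is the correct projective generator rather than a degenerate solution. Tracking the degenerate case $p=0$ and the sign/index conventions for negative and zero $k$ in the monoid $\mathbf{QP}^1$ is the fiddly but essential remaining bookkeeping.
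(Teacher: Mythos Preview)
Your argument is correct and is essentially the paper's own proof, just repackaged: the paper lifts the defining relation to $H_1(\Sigma)$, applies $\varphi_*^i$ for $i=0,\dots,k-1$ and sums (exactly your operator $N_k$), using the same telescoping identity $(\varphi_*-1)N_k=\varphi_*^k-1$ and $\varphi_*[S_1]=[S_1]$, and then treats $k<0$ via $k=-1$ rather than a Laurent $N_k$. The only cosmetic difference is that you make the uniqueness of $[p:q]$ and the nondegeneracy of $(p,kq)$ explicit, whereas the paper leaves these implicit.
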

\begin{proof}
The proposition is trivial for $k=0$ and $k=1$. Assume $k\ge2$.

Let $m(\varphi)=[p:q]$. By the definition of $j_\varphi$, $pj_\varphi(e_3-e_4)=-q[S_1]\in \Coker(\varphi_*-1)$. Hence, there exists $v\in H_1(\Sigma)$ such that 
\[p[\varphi(l)-l]=-q[S_1]+(\varphi_*-1)v\in H_1(\Sigma)\]
Apply $\varphi^i$ ($i=1,2,\cdots k-1$) to the both sides of the equation and sum over $i$. Then 
\[
\sum_{i=1}^{k-1} p(\varphi^{i+1}(l)-\varphi^i(l))=-\sum_{i=1}^{k-1}\{[S_1]+(\varphi_*^{i+1}(v)-\varphi_*^i(v))\},
\]
that is
\[p(\varphi^k(l)-l)=-kq[S_1]+(\varphi_*^k-1)v.\]
Hence, $m(\varphi^k)=[p:kq]=km(\varphi)$ for $k>0$. 

By applying $\varphi^{-1}$ to the equation $p[\varphi(l)-l]=-q[S_1]+(\varphi_*-1)v$, we have
\[p[\varphi^{-1}(l)-l]=q[S_1]+(\varphi_*^{-1}-1)v\in H_1(\Sigma).\]
Hence, $m(\varphi^{-1})=[p:-q]=-m(\varphi)$. Since $m(\varphi^{-k})=-m(\varphi^k)=-km(\varphi)$ for $k>0$, the proposition follows for the case $k<0$.
\end{proof}
Now we compute the image of the function $m$. Especially we prove that $m$ is nontrivial. 
\begin{proposition}
For $g\geq 1$, $m$ is surjective. For $g=0$, $\Im(m)=[1:\mathbf{Z}]$.
\end{proposition}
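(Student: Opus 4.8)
The plan is to convert the definition of $m$ into a statement inside the single vector space $\Coker(\varphi_*-1)=H_1(\Sigma)/(\varphi_*-1)H_1(\Sigma)$. Combining $\Ker i_*=\Ker j_\varphi$ with the identities $j_\varphi(e_1)=[S_1]$ and $j_\varphi(e_3-e_4)=[\omega_l(\varphi)]$, and noting that $j_\varphi$ has a $2$-dimensional kernel in a $3$-dimensional domain, the image of $j_\varphi$ is exactly one-dimensional; since $j_\varphi(e_1+e_2)=0$, this forces $[S_1]$ and $[\omega_l(\varphi)]$ to be linearly dependent and not both zero, and $m(\varphi)=[p:q]$ is the unique projective class with
\[
p\,[\omega_l(\varphi)]+q\,[S_1]=0\in\Coker(\varphi_*-1).
\]
So the whole computation reduces to locating $[S_1]$ and $[\omega_l(\varphi)]$ in this cokernel. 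I would use two facts: the intersection form on $H_1(\Sigma_{g,2})$ has radical exactly $\mathbf{Q}[S_1]$ (from $[S_1]+[S_2]=0$), and $\omega_l$ is a crossed homomorphism, $\omega_l(\varphi\psi)=\omega_l(\varphi)+\varphi_*\omega_l(\psi)$, so that for a Dehn twist $t_c$ along $c$ one has $\omega_l(t_c)=\langle l,c\rangle[c]$, where $\langle l,c\rangle$ denotes the algebraic intersection number of the arc $l$ with $c$.

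For $g=0$, $\Sigma_{0,2}$ is an annulus, $\mathcal{M}_{0,2}\cong\mathbf{Z}$ is generated by the twist $t_c$ about the core $c$, and $H_1=\mathbf{Q}[S_1]$ with $t_c$ acting trivially on homology. Then $\Coker(t_{c*}-1)=\mathbf{Q}[S_1]$ with $[S_1]\ne0$, and $[\omega_l(t_c)]=\langle l,c\rangle[c]=\pm[S_1]$ because $l$ meets the core once. Hence $m(t_c)=[1:\mp1]$, so by the power law $m(t_c^k)=[1:\mp k]$, and since these exhaust the group, $\Im m=[1:\mathbf{Z}]$.

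For $g\ge1$ I would realize every point of $\mathbf{QP}^1$ by explicit elements supported in a genus-one subsurface adjacent to $S_1$, with symplectic pair $a_1,b_1$ ($\langle a_1,b_1\rangle=1$) and $z:=[S_1]$. The point $[1:0]=m(\mathrm{id})$ is the Remark. For $[0:1]$ take the radical transvection $\psi=t_ct_{c'}^{-1}$ with $[c]=a_1+z$, $[c']=a_1$, for which
\[
\psi_*(x)=x+\langle x,a_1\rangle\,z,
\]
so $(\psi_*-1)$ has image $\mathbf{Q}z$ and $[S_1]=0$ in the cokernel; since the one-dimensional image of $j_\psi$ must then be spanned by $[\omega_l(\psi)]=j_\psi(e_3-e_4)$, we get $[\omega_l(\psi)]\ne0$ and $m(\psi)=[0:1]$, with no further computation needed. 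For the remaining $[1:q]$ the idea is to force a denominator into the cokernel: for each integer $d\ge1$ set $\varphi=t_{\partial_1}^{\,s}\,t_{a_1}^{\,d}\,\psi^{-1}$ with $t_{\partial_1}$ the twist about a boundary-parallel curve. As $t_{\partial_1}$ acts trivially and $t_{a_1}$ fixes both $a_1$ and $z$, one finds
\[
(\varphi_*-1)(x)=\langle x,a_1\rangle\,(d\,a_1-z),
\]
so the image is $\mathbf{Q}(d\,a_1-z)$; thus $[S_1]=z\ne0$ but $z\equiv d\,a_1$, i.e.\ $a_1\equiv\tfrac1d z$, in the cokernel. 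A crossed-homomorphism computation then yields $[\omega_l(\varphi)]\equiv(\tfrac{A}{d}+B)\,z$, where $A,B$ are intersection numbers of $l$ with $c,c',\partial_1$; the band sum $c=c'\#\partial_1$ makes $A=\pm1$, and the free exponent $s$ shifts $B$ by $\pm s$, so one can arrange $m(\varphi)=[1:\pm1/d]$. The power law then gives $m(\varphi^k)=[1:\pm k/d]$, and letting $d$ range over all positive integers produces every $[1:q]$, $q\in\mathbf{Q}$. Together with $[0:1]$ and $[1:0]$, this proves surjectivity.

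The step I expect to be the real obstacle is the exact evaluation of $[\omega_l(\varphi)]$ in $\Coker(\varphi_*-1)$. The homological action $\varphi_*$ and the cokernel are immediate linear algebra, but determining the precise coefficient of $z$ in $[\omega_l(\varphi)]$ requires carefully propagating the crossed homomorphism through the product of twists and counting the algebraic intersections of the arc $l$ with $c$, $c'$, and $\partial_1$ (exploiting that $[\omega_l(\varphi)]$ is independent of $l$ to pick a convenient arc). Ensuring the resulting numerator is prime to $d$, so that the realized fraction has denominator exactly $d$, is the delicate point on which surjectivity turns; the rest is routine.
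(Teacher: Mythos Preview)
Your reformulation that $m(\varphi)=[p:q]$ is the unique class with $p\,[\omega_l(\varphi)]+q\,[S_1]=0$ in $\Coker(\varphi_*-1)$ is correct and is exactly how the paper uses $j_\varphi$. Your treatment of $g=0$ agrees with the paper, and your argument for $[0:1]$ when $g\ge1$ (via $\psi=t_c t_{c'}^{-1}$ with $[c]=a_1+z$, $[c']=a_1$, forcing $[S_1]=0$ in the cokernel) is clean and correct.

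Where you diverge is in realising the remaining values. The paper does not split into cases and does not work in the cokernel at all for the computation: it takes a single family $\varphi=t_\alpha^{\,p}t_{\alpha'}t_\beta^{-1}$ (with $[\alpha]+[\alpha']+[\beta]=0$ and $[\beta]=[S_1]$) and exploits the arc-independence of $j_\varphi(e_3-e_4)$ by using \emph{two} arcs $l,l'$ simultaneously, so that $(p+1)j_\varphi(e_3-e_4)$ is represented by the combination $\omega_l(\varphi)+p\,\omega_{l'}(\varphi)$, which equals $[S_1]$ already in $H_1(\Sigma)$. This gives $m(\varphi)=[p+1:-1]$ in one line, and then $m(\varphi^{-q})=[p+1:q]$ covers all of $\mathbf{QP}^1$ as $p,q$ range over $\mathbf{Z}$.

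Your route also works, but the step you flag as ``the real obstacle'' is not one. The algebraic intersection of an arc with a closed curve is the pairing $H_1(\Sigma,\partial\Sigma)\times H_1(\Sigma)\to\mathbf{Q}$, hence depends only on homology classes; since $[c]-[c']=z$ and any arc $l$ from $S_2$ to $S_1$ satisfies $\langle l,z\rangle=\pm1$, your coefficient $A=\langle l,c'\rangle-\langle l,c\rangle=\mp1$ automatically, independent of all choices. The integer part $B$ is shifted by $\pm s$ via $t_{\partial_1}^{\,s}$, so you may set $B=0$ and obtain $m(\varphi)=[1:\pm1/d]$; then $m(\varphi^k)=[1:\pm k/d]$, and every rational is $\pm k/d$ for some integers $k,d\ge1$. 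So no coprimality issue arises, and surjectivity follows. The paper's two-arc trick simply reaches the same conclusion with less machinery.
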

\begin{figure}[h]
  \begin{center}
    \includegraphics{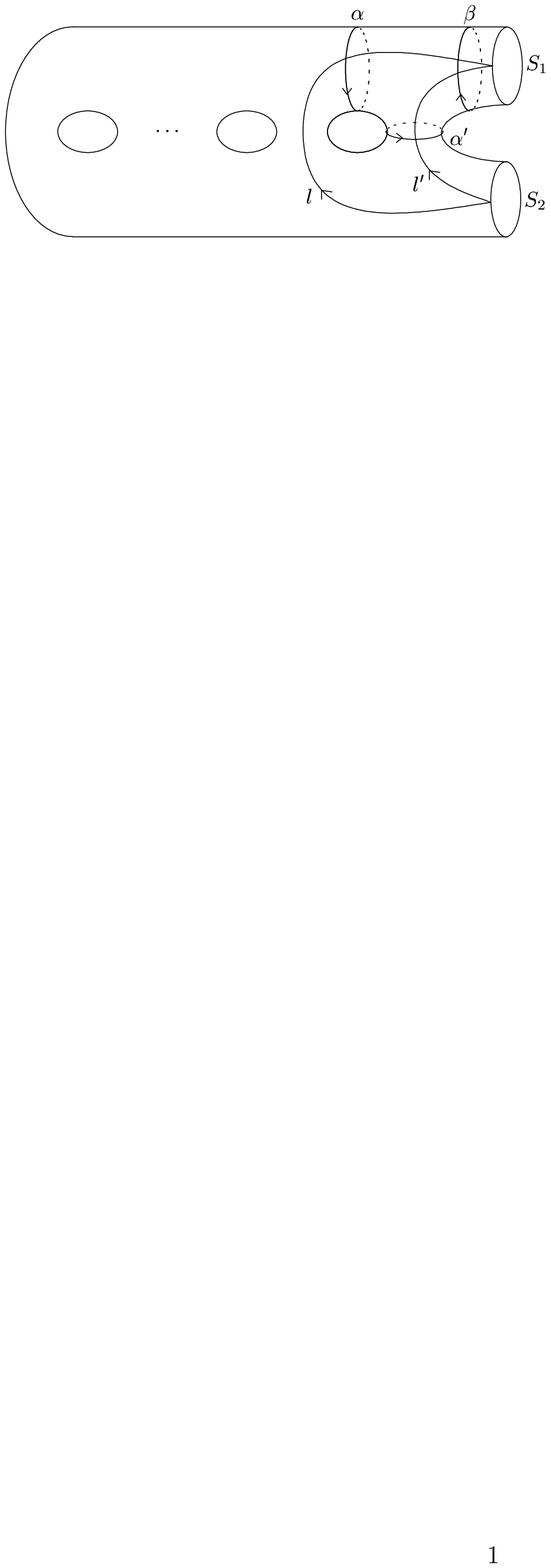}
  \end{center}
  \caption{}
  \label{d.eps}
\end{figure}
\begin{proof}
Suppose $g\geq1$. We choose oriented simple closed curves $\alpha$, $\alpha'$, and $\beta$ and paths $l$ and $l'$ as shown in Figure \ref{d.eps}. 
We denote the Dehn twists along a simple closed curve $C\subset\Sigma$ by $t_C$, and the homology class of $C$ by $[C]$ . Then $[\alpha]+[\alpha']+[\beta]=0\in H_1(\Sigma)$ since they bound a 2-chain. For $p\in\mathbf{Z}$, if we denote $\varphi:=t_{\alpha}^pt_{\alpha'}t_{\beta}^{-1}$, then
\begin{align*}
j_\varphi((p+1)(e_3-e_4))
&=
\omega_l(\varphi)+p\omega_{l'}(\varphi)\\
&=(t_{\alpha}^p t_{\alpha'}t_{\beta}^{-1})(l)-l+p\{(t_{\alpha}^p t_{\alpha'}t_{\beta}^{-1})(l')-l'\}\\
&=p([\alpha]+[\alpha']+[\beta])+[\beta]=[\beta]=[S_1].
\end{align*}
Hence, $j_{\varphi}((p+1)(e_3-e_4)-e_1)=0$, so that 
\[
m(\varphi)=[p+1:-1].
\]
By Proposition 2.5, we have
\[m(\varphi^{-q})=-q[p+1:-1]=
\left\{\begin{array}{ll}
[p+1:q], &\textrm{ if } p\ne-1\\
{}[0:1], &\textrm{ if } p=-1 .
\end{array} \right.
\quad (q\in \mathbf{Z})\]
Since $p$ and $q$ can run over all integers, we see $m$ is surjective for $g\ge 1$. 

For $g=0$, $\mathcal{M}_{0,2}$ is the infinite cyclic group generated by $t_\beta$.
Since $m(t_{\beta}^{-q})=[1:q]$, we have $\Im(m)=[1:\mathbf{Z}]$.
\end{proof}
\newpage
\section{The difference of two Meyer cocycles $\eta^*\tau_{g+1}$ and $\theta^*\tau_g$}
In this section (co)homology groups are with $\mathbf{Z}$ coefficient unless specified. 

Let $g\ge0$ be a positive integer. In Introduction, we defined the homomorphisms $\eta: \mathcal{M}_{g,2} \to \mathcal{M}_{g+1,0}$ and $\theta:\mathcal{M}_{g,2} \to \mathcal{M}_g$ to be the induced maps by sewing a pair of disks and by sewing an annulus onto the surface $\Sigma_{g,2}$ along their boundaries respectively. We denote the Meyer cocycle on the mapping class group of genus $g$ closed orientable surface $\mathcal{M}_g$ by $\tau_g\in Z^2(\mathcal{M}_g)$ and
define $\tilde{\tau}_g\in Z^2(\mathcal{M}_{g,2})$ to be the difference between the Meyer cocycles
\[
\tilde{\tau}_g:=\eta^*\tau_{g+1}-\theta^*\tau_g.
\]
Let $P:=S^2-\amalg_{i=1}^3 D^2$. In this section, we prove the main theorem and calculate the changes of signature associated with sewing a pair of trivial disk bundles $P\times \amalg_{i=1}^2 D^2$ and sewing an trivial annulus bundles $P\times (S^1\times I)$ onto $\Sigma_{g,2}$ bundle on the pair of pants $P$ along their boundaries. To state the main theorem, we define the sign of $[p:q]\in \mathbf{QP^1}$ by 
\[
\sign([p:q]):=
\begin{cases}
1 &\text{ if } pq>0,\\
0 &\text{ if } pq=0,\\
-1 &\text{ if } pq<0.
\end{cases}
\]
\begin{theorem}\label{theorem3.1}
For $\varphi, \psi\in\mathcal{M}_{g,2}$, we define 
\[\tilde{\phi}_g(\varphi):=\sign(m(\varphi)).\]
Then $\tilde{\phi}_g$ cobounds the difference $\tilde{\tau}_g$ between the Meyer cocycles $\eta^*\tau_{g+1}$ and $\theta^*\tau_g$ 
\begin{align*}
\tilde{\tau}_g(\varphi, \psi)&=\delta\tilde{\phi}_g(\varphi, \psi)\\
&=\sign(m(\varphi))+\sign(m(\psi))+\sign(m((\varphi\psi)^{-1})).
\end{align*}
\end{theorem}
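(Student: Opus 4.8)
The plan is to relate both sides of the equation to the signature of an explicit four-manifold and then compute the left-hand side via the mapping-torus homology data packaged in $m$. The Meyer cocycle $\tau_g(\varphi,\psi)$ is by definition $\Sign E_g^{\varphi,\psi}$, so $\tilde\tau_g(\varphi,\psi)=\Sign E_{g+1}^{\eta(\varphi),\eta(\psi)}-\Sign E_g^{\theta(\varphi),\theta(\psi)}$. The two bundles $E_{g+1}^{\eta(\varphi),\eta(\psi)}$ and $E_g^{\theta(\varphi),\theta(\psi)}$ are obtained from the same $\Sigma_{g,2}$-bundle $E_{g,2}^{\varphi,\psi}$ over $P$ by gluing, respectively, a trivial annulus bundle $P\times(S^1\times I)$ and a trivial pair-of-disks bundle $P\times\amalg_{i=1}^2 D^2$ along the boundary. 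Since the glued pieces are products over $P$, I expect their own signatures to vanish, but Novikov additivity does not apply directly because the gluing is along a three-manifold-with-corners rather than a closed three-manifold; the discrepancy is a Wall non-additivity term. So the first step is to set up a Mayer--Vietoris / long-exact-sequence computation comparing the intersection forms, isolating the correction as a Maslov-type or Wall correction supported on the homology of the common boundary piece.

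The cleanest route, which I would take, is to reduce everything to the boundary three-manifolds. The boundary of $E_{g,2}^{\varphi,\psi}$ over $P$ fibers over $\partial P$, which is a disjoint union of three circles, and the fiber restrictions are exactly the mapping tori $X^{\varphi}$, $X^{\psi}$, $X^{(\varphi\psi)^{-1}}$ studied in Section~1. The gluing data that distinguishes the two closed bundles is precisely how the Lagrangian $\Ker i_{\ast}\subset H_1(\partial X^{\bullet})$ sits relative to the subspaces killed by capping off with disks versus with annuli. This is where $m$ enters: the pair $[p:q]$ records the slope of the nontrivial second generator $p(e_3-e_4)+q e_1$ of $\Ker i_{\ast}$, i.e.\ the angle between the Lagrangian coming from the disk filling and the one coming from the annulus filling. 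I would therefore express the Wall correction at each of the three boundary circles as a Maslov index of a triple of Lagrangians in $H_1$ of the fiber, and identify that local contribution with $\sign(m(\cdot))$ of the corresponding monodromy.

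The concrete computation I anticipate is to show that each of the three circles of $\partial P$ contributes a term $\sign(m(\varphi))$, $\sign(m(\psi))$, $\sign(m((\varphi\psi)^{-1}))$ (using that the third monodromy along $\gamma$ is $(\psi\varphi)^{-1}$, conjugate to $(\varphi\psi)^{-1}$, and that $m$ is a class function by Lemma~1.3), and that the remaining interior contributions to the two signatures cancel because both interiors are built from the same $E_{g,2}^{\varphi,\psi}$. Additivity of the three boundary corrections then yields $\delta\tilde\phi_g(\varphi,\psi)=\sum_{\text{circles}}\sign(m(\cdot))$, matching the claimed coboundary formula; that this is genuinely a coboundary is consistent with Section~1, since $m(\mathrm{id})=[1:0]$ gives $\sign=0$.

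The \textbf{main obstacle} I foresee is making the Wall non-additivity term precise and matching its sign conventions to $\sign([p:q])$. Wall's correction is the signature of a quadratic form on a space built from three Lagrangians $L_1,L_2,L_3$ in the symplectic space $H_1(\text{fiber};\mathbf{R})$ (with its intersection pairing), and one must check that this triple-Maslov index collapses, in our rank-reasons situation, to the one-dimensional sign $\sign(pq)$ determined by the slope $[p:q]$. Carefully tracking orientations through the two different fillings — so that the annulus filling and the disk filling produce Lagrangians whose relative position is exactly $q/p$ — is the delicate point; the rest is a bookkeeping exercise in Poincar\'e--Lefschetz duality of the kind already rehearsed at the start of Section~1.
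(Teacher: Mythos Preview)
Your outline has the right overall shape --- two applications of Wall non-additivity, one for each filling of $E_{g,2}^{\varphi,\psi}$ --- but the step where you ``express the Wall correction at each of the three boundary circles as a Maslov index \dots and identify that local contribution with $\sign(m(\cdot))$'' is not correct as stated, and this is where the argument would break down. The corner locus in Wall's formula is $Z=(\partial\Sigma_{g,2})\times\partial P$, and the Lagrangian coming from the trivial product over $P$ mixes the three components of $\partial P$; the Maslov term does \emph{not} split as a sum of three local contributions. Indeed, the paper carries out exactly your disk-filling Wall computation in Proposition~\ref{proposition3.3} and obtains
\[
\Sign E_g - \Sign E_{g,2} \;=\; -\sign\bigl(m(\varphi)+m(\psi)+m((\varphi\psi)^{-1})\bigr),
\]
the sign of the \emph{sum} in the monoid $\mathbf{QP}^1$, not the sum of signs. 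The annulus-filling correction likewise fails to localize. What actually happens is that the two coupled terms cancel when you subtract, leaving $\sum\sign(m)$; but establishing this requires doing both full Wall computations and is strictly more work than you indicate, with the identification of the local slope $[p:q]$ with a single-circle Maslov index never literally occurring.

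The paper sidesteps this entirely. Instead of Wall, it builds a compact $5$-manifold $\tilde E^{\varphi,\psi}=(I\times E_{g+1}^{\eta(\varphi),\eta(\psi)})\cup(D^2\times I\times P)$ whose boundary is the disjoint union (up to orientation) of $E_{g+1}^{\eta(\varphi),\eta(\psi)}$, $E_g^{\theta(\varphi),\theta(\psi)}$, and three closed $4$-manifolds $\tilde X^\varphi,\tilde X^\psi,\tilde X^{(\varphi\psi)^{-1}}$, one over each circle of $\partial P$. The vanishing of the signature of a boundary then gives the decomposition $\tilde\tau_g(\varphi,\psi)=\Sign\tilde X^\varphi+\Sign\tilde X^\psi+\Sign\tilde X^{(\varphi\psi)^{-1}}$ for free (Lemma~\ref{lemma3.3}), with no Maslov bookkeeping. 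The genuine computation is then $\Sign\tilde X^\varphi=\sign(m(\varphi))$ for a single mapping torus (Lemma~\ref{lemma3.4}), done by writing down explicit $2$-cycles in $\tilde X^\varphi$ via Mayer--Vietoris and computing one self-intersection number, which comes out to $pq$. This is the missing idea in your plan: rather than hoping the Wall term localizes, the paper engineers a cobordism that forces the localization, reducing to a single-monodromy signature computation in which $m(\varphi)$ appears directly.
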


\begin{remark}
Let $k$ be an integer.
By Lemma 2.2 and Proposition 2.5, $\tilde{\phi}_g$ has the properties
\begin{gather*}
\tilde{\phi}_g(\psi\varphi\psi^{-1})=\tilde{\phi}_g(\varphi), and\\
\tilde{\phi}_g(\varphi^k)=\sign(k)\tilde{\phi}_g(\varphi)
\end{gather*}
for any $g\ge0$.
\end{remark}
\subsection{Proof of Main Theorem}
In this subsection we prove Theorem \ref{theorem3.1}. 

In Introduction, we defined $E_{g,r}^{\varphi,\psi}$ as a $\Sigma_{g,r}$ bundle on the pair of pants $P$ which has monodromies $\varphi$, $\psi$, and $(\psi\varphi)^{-1}\in \mathcal{M}_{g,r}$ along $\alpha$, $\beta$, and $\gamma\in \pi_1(P)$ respectively, and in Subsection 2.1, we defined $X_{g,r}^\varphi$ by the mapping torus of $\Sigma_{g,r}\times I/\sim $ where $(x,1)\sim (h(x), 0)$ for $\varphi=[h]\in \mathcal{M}_{g,r}$.

We consider 
\[
E_{g+1}^{\eta(\varphi), \eta(\psi)}=E_{g,2}^{\varphi, \psi} \cup (-S^1\times I\times P),
\]
and
\[
X_{g+1}^{\eta(\varphi)}=X_{g,2}^\varphi \cup (-S^1\times I\times S^1).
\]
Define 
\[
\begin{array}{cccc}
G:&\partial D^2\times I&\to&\{1\}\times S^1\times I.\\
&(x, t)&\mapsto &(1, x, \frac{1+t}{3})
\end{array}
\]
By the map $G$, we can glue $D^2\times I$ to $I\times S^1\times I$ as shown in figure 3. 
\begin{figure}[h]
  \begin{center}
    \includegraphics{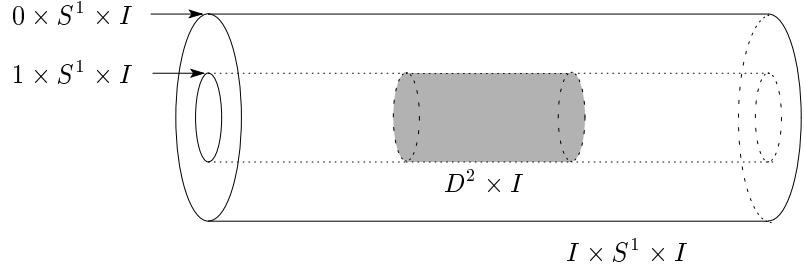}
  \end{center}
  \caption{Gluing map $G$}
  \label{g.eps}
\end{figure}
Glue $D^2\times I\times P$ to 
$I\times E_{g+1}^{\eta(\varphi), \eta(\psi)}=(I\times E_{g,2}^{\varphi, \psi}) \cup (-I\times S^1\times I\times P)$ with the gluing map 
$G\times id_P:\partial D^2\times I\times P\to \{1\}\times S^1\times I\times P$. In the same way, glue $D^2\times I\times S^1$ to 
$I\times X_{g+1}^{\eta(\varphi)}=(I\times X_{g,2}^\varphi) \cup (-I\times S^1\times I\times S^1)$ with the gluing map 
$G\times id_{S^1}:\partial D^2\times I\times S^1\to \{1\}\times S^1 \times I\times S^1$. 
Denote 
\[
\tilde{E}^{\varphi, \psi}:=(I\times E_{g+1}^{\eta(\varphi),\eta(\psi)})\cup (D^2\times I\times P), \text{ and }
\tilde{X}^\varphi:=(I\times X_{g+1}^{\eta(\varphi)})\cup (D^2\times I\times S^1).
\]
To prove main theorem, it suffices to prove Lemma \ref{lemma3.3} and Lemma \ref{lemma3.4} below.
\begin{lemma}\label{lemma3.3}
\[
(\eta^*\tau_{g+1}-\theta^*\tau_{g})(\varphi, \psi)=\Sign \tilde{X}^\varphi+\Sign \tilde{X}^\psi+\Sign \tilde{X}^{(\varphi\psi)^{-1}} \ \text{ for } \varphi, \psi\in\mathcal{M}_{g,2},\  g\geq0.
\]
\end{lemma}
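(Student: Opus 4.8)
The plan is to reduce Lemma \ref{lemma3.3} to the statement that each $\tilde{X}^\varphi$ is a cobordism which computes the relevant difference of signatures, and then to apply Novikov additivity together with the additivity of the Meyer cocycle along the pants decomposition of $P$. First I would unpack the construction of $\tilde{X}^\varphi$. By definition it is $I\times X_{g+1}^{\eta(\varphi)}$ with a copy of $D^2\times I\times S^1$ glued along the sub-bundle $I\times S^1\times I\times S^1$ using the map $G\times id_{S^1}$. Since $X_{g+1}^{\eta(\varphi)}=X_{g,2}^\varphi\cup(-S^1\times I\times S^1)$, filling in the annulus factor $S^1\times I$ with the disk bundle has the effect, on one end of the cylinder $I\times(\cdot)$, of converting the annulus-capped mapping torus $X_{g+1}^{\eta(\varphi)}$ into the disk-capped mapping torus $X_g^{\theta(\varphi)}$. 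The upshot I would establish is that $\tilde{X}^\varphi$ is an oriented $4$-manifold whose boundary (after accounting for orientations) is the disjoint union $X_{g+1}^{\eta(\varphi)}\amalg(-X_g^{\theta(\varphi)})$ together with corner pieces coming from $\partial D^2\times I\times S^1$; the corners are themselves boundaries of trivial bundles and so contribute no signature.

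Next I would assemble the three pieces over the pair of pants. Gluing $\tilde{E}^{\varphi,\psi}$ together over $P$ and decomposing $P$ into its three boundary-parallel pieces, Novikov additivity gives that $\Sign E_{g+1}^{\eta(\varphi),\eta(\psi)}-\Sign E_g^{\theta(\varphi),\theta(\psi)}$ equals the sum of the signatures of the three cobordisms built from the monodromies $\varphi$, $\psi$, and $(\varphi\psi)^{-1}$. Concretely, the difference $\tilde\tau_g(\varphi,\psi)=\Sign E_{g+1}^{\eta(\varphi),\eta(\psi)}-\Sign E_g^{\theta(\varphi),\theta(\psi)}$ is realized as the signature of the $5$-manifold $\tilde{E}^{\varphi,\psi}$ viewed as a cobordism rel the boundary data, but since $\tilde E$ is odd-dimensional its signature vanishes; what survives is the signature of its boundary $4$-manifold, which by the pants decomposition splits as the three terms $\Sign\tilde X^\varphi+\Sign\tilde X^\psi+\Sign\tilde X^{(\varphi\psi)^{-1}}$. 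The identification of each boundary piece of $\tilde E^{\varphi,\psi}$ with the corresponding $\tilde X$ built from a single monodromy is the geometric heart of the argument.

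The main technical step, and the one I expect to be the chief obstacle, is keeping careful track of orientations and corners in the gluing. The manifold $\tilde E^{\varphi,\psi}$ is a $5$-dimensional cobordism between $I\times E_{g+1}^{\eta(\varphi),\eta(\psi)}$ and a $4$-manifold that, after the disk-bundle filling, becomes $E_g^{\theta(\varphi),\theta(\psi)}$ together with manifestly null-signature product pieces. I would invoke the cobordism invariance of the signature to conclude that $\Sign E_{g+1}^{\eta(\varphi),\eta(\psi)}-\Sign E_g^{\theta(\varphi),\theta(\psi)}$ equals the signature of the remaining boundary components of $\tilde E^{\varphi,\psi}$, and these are exactly the three solid-torus-type cobordisms $\tilde X^\varphi$, $\tilde X^\psi$, $\tilde X^{(\varphi\psi)^{-1}}$ sitting over the three boundary circles of $P$. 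The sign conventions for the gluing map $G$, which inserts the annulus at parameter $\tfrac{1+t}{3}$, are chosen precisely to make the orientations on the overlaps cancel, so verifying these cancellations is where the real care is needed.

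Finally, I would confirm that the product pieces contribute trivially: each $D^2\times I\times S^1$ and each $S^1\times I\times(\cdot)$ factor is a trivial bundle, hence bounds and has intersection form with zero signature, so by Novikov additivity they drop out of every signature computation. With the three $\tilde X$ cobordisms identified and all product contributions shown to vanish, the claimed identity $(\eta^*\tau_{g+1}-\theta^*\tau_g)(\varphi,\psi)=\Sign\tilde X^\varphi+\Sign\tilde X^\psi+\Sign\tilde X^{(\varphi\psi)^{-1}}$ follows directly.
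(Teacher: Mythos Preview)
Your approach is essentially the paper's: identify $\partial\tilde E^{\varphi,\psi}$ as $(-E_{g+1}^{\eta(\varphi),\eta(\psi)})\cup E_g^{\theta(\varphi),\theta(\psi)}\cup\tilde X^\varphi\cup\tilde X^\psi\cup\tilde X^{(\psi\varphi)^{-1}}$, use that a bounding closed $4$-manifold has signature zero, and apply Novikov additivity. Two small cleanups: the signature of the odd-dimensional $\tilde E$ is not defined---what you need (and what the paper uses) is simply $\Sign\partial\tilde E^{\varphi,\psi}=0$; and $\partial\tilde E^{\varphi,\psi}$ is already a closed $4$-manifold, so the corner bookkeeping you anticipate is unnecessary (the paper also silently uses $\tilde X^{(\psi\varphi)^{-1}}\cong\tilde X^{(\varphi\psi)^{-1}}$ to pass between the two conjugate monodromies).
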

\begin{lemma}\label{lemma3.4}
\[\Sign \tilde{X}^\varphi=\sign(m(\varphi)) \ \text{ for } \varphi\in\mathcal{M}_{g,2},\  g\geq0.\]
\end{lemma}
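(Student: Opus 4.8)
The plan is to read off $\Sign \tilde X^\varphi$ from the intersection form on $H_2(\tilde X^\varphi;\mathbf Q)$, using that $\tilde X^\varphi$ is glued from two pieces of signature zero. Write $\tilde X^\varphi=A\cup_C B$ with $A:=I\times X_{g+1}^{\eta(\varphi)}$, $B:=D^2\times I\times S^1$, and $C:=\partial D^2\times I\times S^1\cong T^2\times I$. Since $A$ is a product over a closed $3$-manifold and $B\simeq S^1$, both have zero signature, so the entire signature comes from the gluing. First I would run the Mayer--Vietoris sequence (with $\mathbf Q$ coefficients) for this decomposition, using $H_*(C)=H_*(T^2)$, $H_*(B)=H_*(S^1)$, and the exact sequence $0\to\Coker(\eta(\varphi)_*-1)\to H_1(X_{g+1}^{\eta(\varphi)})\to H_1(S^1)\to 0$ for $A$. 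Writing $H_1(C)=\mathbf Q\langle[c],[u]\rangle$, where $c$ is the annulus core (isotopic to $S_1$) and $u$ the mapping-torus circle, the map $H_1(C)\to H_1(A)\oplus H_1(B)$ sends $[u]$ to a nonzero class and $[c]\mapsto([c],0)$; hence the only way to produce a new class in $H_2(\tilde X^\varphi)$ is for $[c]$ to vanish in $H_1(X_{g+1}^{\eta(\varphi)};\mathbf Q)$.

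The second step is to detect $[c]=0$ in terms of $m(\varphi)=[p:q]$. Since $c$ is isotopic to $S_1$, the class $[c]$ is the image of $[S_1]\in\Coker(\eta(\varphi)_*-1)$. I would compare this cokernel with $\Coker(\varphi_*-1)$ on $H_1(\Sigma_{g,2})$: writing $H_1(\Sigma_{g+1})=H_1(\Sigma_{g,2})\oplus\mathbf Q\langle d\rangle$ as $\mathbf Q$-vector spaces, where $d$ closes up the arc $l$ through the new handle, one checks $(\eta(\varphi)_*-1)$ restricts to $(\varphi_*-1)$ on $H_1(\Sigma_{g,2})$ and sends $d\mapsto\omega_l(\varphi)$. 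Thus $[\omega_l(\varphi)]=0$ in $\Coker(\eta(\varphi)_*-1)$, and the defining relation $p\,[\omega_l(\varphi)]=-q[S_1]$ of $m(\varphi)$ (the definition of $m$ together with the identity $j_\varphi(e_3-e_4)=[\omega_l(\varphi)]$) forces $q[S_1]=0$ there. Hence $[c]=0$ exactly when $q\neq0$. When $q=0$ the sequence gives $H_2(\tilde X^\varphi)\cong H_2(A)$, whose form is degenerate, so $\Sign\tilde X^\varphi=0=\sign(m(\varphi))$.

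When $q\neq0$ there is a single new generator $Z\in H_2(\tilde X^\varphi)$ beyond the vertical classes from $H_2(A)$, represented by the core disk $D=D^2\times\{\tfrac12\}\times\{u_0\}\subset B$ (with $\partial D=c\times\{u_0\}$) capped by a rational $2$-chain $F\subset X_{g+1}^{\eta(\varphi)}$ with $\partial F=c\times\{u_0\}$. The crux is then the self-intersection $Z\cdot Z$. Pushing $Z$ off in the mapping-torus direction $u$ identifies $Z\cdot Z$ with the self-linking $\mathrm{lk}_{X_{g+1}}(c\times\{u_0\},\,c\times\{u_1\})$, which I would evaluate by constructing $F$ from the $2$-chain bounding $p(e_3-e_4)+qe_1\in\Ker i_{\varphi*}$ inside $X_{g,2}^\varphi$ (the class defining $[p:q]$), the $2$-chain $L(s,t)=[l(s),t]$, and the capping annulus. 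Solving $\partial F=c\times\{u_0\}$ forces a division of that relation by $q$, producing a coefficient $p/q$; hence $Z\cdot Z$ has sign $\sign(pq)$, and $\Sign\tilde X^\varphi=\sign(Z\cdot Z)=\sign(pq)=\sign(m(\varphi))$.

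The main obstacle is this last self-intersection computation. Both choosing the bounding chain $F$ so that the coefficient genuinely comes out proportional to $p/q$, and keeping track of every orientation convention---the orientation of $X^\varphi$, the sign in $-S^1\times I\times S^1$, the gluing map $G$, and the direction of the push-off---are needed to land on $+\sign(pq)$ rather than its negative. A subsidiary point is that the vertical classes from $H_2(A)$ contribute nothing to the signature beyond $\langle Z\cdot Z\rangle$: they lie in the radical of the form since they can be pushed into the boundary piece $\{0\}\times X_{g+1}^{\eta(\varphi)}$, so only $Z\cdot Z$ survives and the signature is confined to $\{-1,0,1\}$, matching $\sign(m(\varphi))$.
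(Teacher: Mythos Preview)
Your strategy is sound and would work, but it follows a genuinely different route from the paper's proof, and the step you flag as the ``main obstacle'' is precisely the one the paper resolves by a different and more explicit mechanism.

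\textbf{Comparison of decompositions.} You split $\tilde X^\varphi$ as $A\cup_C B$ with $A=I\times X_{g+1}^{\eta(\varphi)}$ and $C\simeq T^2$. The paper instead uses the \emph{open}-surface piece: it sets $Y_1=I\times X_{g,2}^{\varphi}$ and $Y_2=(I\times S^1\times I\times S^1)\cup(D^2\times I\times S^1)$, so that $Y_1\cap Y_2\simeq\partial X_{g,2}^{\varphi}$ is a pair of tori. This yields the exact sequence
\[
H_2(X_{g,2}^{\varphi})\longrightarrow H_2(\tilde X^\varphi)\longrightarrow \Ker\bigl(H_1(\partial X_{g,2}^{\varphi})\to H_1(X_{g,2}^{\varphi})\bigr)\longrightarrow 0,
\]
and the right-hand kernel is always two-dimensional, spanned by $e_1+e_2$ and $p(e_3-e_4)+qe_1$. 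Thus there is no case split on $q$: the class corresponding to $p(e_3-e_4)+qe_1$ always exists, and its self-intersection is computed uniformly.

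\textbf{How the paper computes the self-intersection.} Rather than a linking-number argument, the paper writes down explicit singular $2$-chains $s'_{0i}$ (an annulus) and $s'_{1i}$ (a disk) inside $Y_2$, for $i=0,1$, together with a chain $j_i s$ in $Y_1$, so that $[j_i s - (p\,s'_{0i}+q\,s'_{1i})]$ represents the lift of $p(e_3-e_4)+qe_1$. Using two different values of $i$ to separate the $Y_1$-parts, all cross-terms vanish except $p\,s'_{00}\cdot q\,s'_{11}$, and one sees directly from the pictures that $s'_{00}$ and $s'_{11}$ meet transversally in a single positive point. Hence the self-intersection is $pq$ and $\Sign\tilde X^\varphi=\sign(pq)$. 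The class $e_1+e_2$ and the image of $H_2(X_{g,2}^{\varphi})$ are shown to lie in the radical by the same ``push to the boundary retract $X''_1$'' argument you use.

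\textbf{What each approach buys.} Your decomposition has the conceptual advantage that only one extra class can appear, making the rank statement transparent; but it forces a case split and pushes the sign determination into the linking number $\mathrm{lk}_{X_{g+1}}(c_{u_0},c_{u_1})$, whose evaluation (as you note) requires constructing a bounding chain out of the relation defining $[p:q]$ and tracking all orientation conventions. The paper's decomposition trades that for a slightly larger kernel but an entirely explicit cocycle computation in which the factor $pq$ appears for free as a product of coefficients, with the single transverse intersection $s'_{00}\cdot s'_{11}=+1$ fixing the sign. In effect, the paper moves the annulus piece $I\times S^1\times I\times S^1$ from your $A$ into $Y_2$ so that the ``hard'' geometry happens in a product region where everything can be drawn. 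If you want to complete your route, the cleanest way to get $Z\cdot Z=\pm p/q$ (hence $\sign(pq)$) is to build the Seifert chain for $c_{u_0}$ from the chain $s$ with $\partial s=p(f_3-f_4)+qf_1$ together with $p$ copies of the swept arc $L$ and $1/q$ of the fiber relation, and then intersect with $c_{u_1}$; the only contribution comes from the swept-arc piece crossing $S_1\times\{u_1\}$ once.
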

\begin{proof}[proof of Lemma 3.3]
Note that 
\[X^\varphi=\tilde{E}^{\varphi, \psi}|_{\partial D_1}. \]
Then we can see 
\begin{align*}
\partial \tilde{E}^{\varphi, \psi}
&=(\tilde{E}^{\varphi, \psi}|_{\partial D_1}\cup\tilde{E}^{\varphi, \psi}|_{\partial D_2}\cup\tilde{E}^{\varphi, \psi}|_{\partial D_3})
\cup E_{g}^{\theta(\varphi), \theta(\psi)}\cup -E_{g+1}^{\eta(\varphi), \eta(\psi)}\\
&=(\tilde{X}^{\varphi}\cup \tilde{X}^{\psi}\cup \tilde{X}^{(\psi\varphi)^{-1}})\cup E_{g}^{\theta(\varphi), \theta(\psi)}\cup -E_{g+1}^{\eta(\varphi), \eta(\psi)}.
\end{align*}
By Novikov Additivity, the fact $\Sign\partial\tilde{E}^{\varphi, \psi}=0$ implies
\[
\Sign(E_{g+1}^{\eta(\varphi), \eta(\psi)})-\Sign(E_{g}^{\theta(\varphi), \theta(\psi)})=\Sign \tilde{X}^\varphi+\Sign\tilde{X}^\psi+\Sign \tilde{X}^{(\psi\varphi)^{-1}}.
\]
Notice that $\tilde{X}^{(\psi\varphi)^{-1}}$ is diffeomorphic to $\tilde{X}^{(\varphi\psi)^{-1}}$, so that $\Sign \tilde{X}^{(\psi\varphi)^{-1}}=\Sign \tilde{X}^{(\varphi\psi)^{-1}}$.
By the definition of the Meyer cocycle, we have \[\Sign(E_{g+1}^{\eta(\varphi),\eta(\psi)})=\eta^*\tau_{g+1}(\varphi, \psi)
\text{, and }\ \Sign(E_{g}^{\theta(\varphi),\theta(\psi)})=\theta^*\tau_g(\varphi, \psi).
\]  
Define $\tilde{\phi}(\varphi)=\Sign(\tilde{X}^\varphi)$, then we have $\delta\tilde{\phi}=\eta^*\tau_{g+1}-\theta^*\tau_{g}$. We get the cobounding function $\tilde{\phi}$. 
\end{proof}
\begin{proof}[proof of Lemma 3.4]
Write simply $X:=X_{g+1}^{\eta(\varphi)}$, $X':=X_{g,2}^{\varphi}$, and $Y:=\tilde{X}^\varphi=(I\times X)\cup (D^2\times I\times S^1)$.

For $i=0,1$, define
\[
\begin{array}[t]{cccll}
j_i:&X&\to& I\times X &\hookrightarrow \ Y,\\
&x&\mapsto &(i, x)&\\
\end{array}
\]
where $I\times X \hookrightarrow Y$ is a natural embedding. We will prove there is a exact sequence 
\[
\begin{CD}
H_2(X')@>j_{0*}=j_{1*}>> H_2(Y)@>>> \Ker(H_1(\partial X')\to H_1(X'))@>>> 0.
\end{CD}
\]
Define $Y_1:=I\times X'$ and $Y_2:=(I\times S^1\times I\times S^1)\cup (D^2\times I\times S^1)\subset Y$, then 
\[
Y_1\simeq X', Y_2\simeq S^1, Y_1\cap Y_2\simeq \partial X'=(S_1\amalg S_2)\times S^1.
\]

By the Mayer-Vietoris exact sequence, we have 
\[
\begin{array}{cccccccc}
H_2(Y_1)\oplus H_2(Y_2)&\to& H_2(Y)&\to &H_1(Y_1\cap Y_2)&\to &H_1(Y_1)\oplus H_1(Y_2)& \textrm{(exact)}.\\
\rotatebox{90}{$\cong$}&&&& \rotatebox{90}{$\cong$}&& \rotatebox{90}{$\cong$}&\\
H_2(X')\oplus 0&&&& H_1(\partial X') && H_1(X')\oplus H_1(S^1)&
\end{array}
\]
Denote the map $H_1(\partial X')\to H_1(X')\oplus H_1(S^1)$ in the above diagram by $h$. the projection $H_1(\partial X')\to H_1(S^1)$ to the second entry of $h$ is the composite of inclusion homomorphism $H_1(\partial X')\to H_1(X')$ and $\pi_* :H_1(X')\to H_1(S^1)$. Therefore, 
\[
\Ker(H_1(\partial X')\to H_1(X')\oplus H_1(S^1))=\Ker(H_1(\partial X')\to H_1(X')).
\]
So the sequence is exact.

Next we construct the splitting $H_2(Y;\mathbf{Q})=j_{i*}H_2(X';\mathbf{Q})\oplus \Ker(H_1(\partial X';\mathbf{Q})\to H_1(X';\mathbf{Q}))$. Note that there exist $p$, $q\in \mathbf{Q}$ such that
\[
\Ker(H_1(\partial X';\mathbf{Q})\to H_1(X';\mathbf{Q}))=\mathbf{Q}(e_1+e_2)\oplus\mathbf{Q}\{p(e_3-e_4)+qe_1\}
\]
as in section 1.
To construct the splitting, we choose elements of inverse images of $e_1+e_2$, $p(e_3-e_4)+qe_1$ under $H_2(Y)\to H_1(\partial X')$. Define $\iota_Y: \Sigma_{g+1}\to Y$ by
\[
\begin{array}[t]{cccccccc}
&\Sigma_{g+1}&\to& X &\to& I\times X & \hookrightarrow &Y,\\
&x&\mapsto & (x,0)&\mapsto &(0,x,0)&&
\end{array}
\]
then we have
\[
\begin{array}{ccccc}
H_2(\tilde{X})&\to&H_1(Y_1\cap Y_2)&\to&H_1(\partial X'),\\
\iota_{Y*}[\Sigma_g]&\mapsto&\partial_*\iota_{Y*}[\Sigma_g]&\to &e_1+e_2
\end{array}
\]
so we choose $\iota_{Y*}[\Sigma_g]$ as an element of the inverse image of $e_1+e_2$.

Next, we choose an element of the inverse image of $p(e_3-e_4)+qe_1$. Since $p(e_3-e_4)+qe_1\in \Ker(H_1(\partial X';\mathbf{Q})\to H_1(X';\mathbf{Q}))$, there exists a singular 2-cochain $s\in C_2(X';\mathbf{Q})$ such that 
\[
\partial s=p(f_3-f_4)+qf_1\in B_1(X';\mathbf{Q}).
\]
For $i=0,1$, define $s'_{0i}:I\times S^1 \to I\times S^1\times I\times S^1\hookrightarrow Y_2$ by $s'_{0i}(s, t)=(i, 0, s, t)$.
then 
\[
[\partial s'_{0i}]=[j_if_3-j_if_4]\in H_1(Y_1\cap Y_2;\mathbf{Q}).
\]
\begin{figure}[h]
  \begin{center}
    \includegraphics{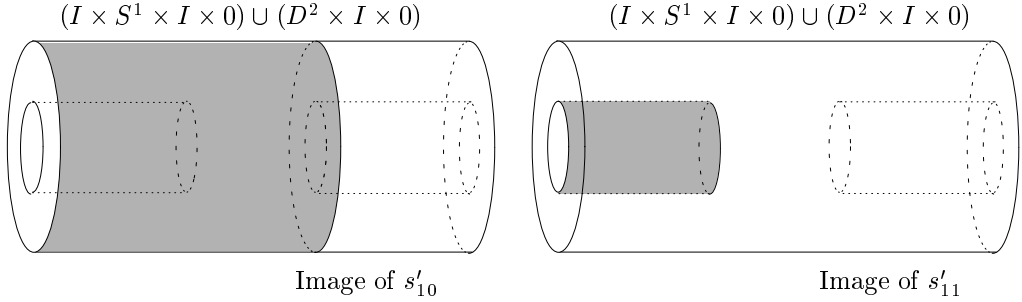}
  \end{center}
  \caption{Images of $s'_{10}$ and $s'_{11} \subset (I\times S^1\times I\times 0)\cup (D^2\times I\times 0)\subset Y$}
  \label{m.eps}
\end{figure}
Define $s'_{1i}: D^2\to (-I\times S^1\times I\times S^1)\cup(D^2\times I\times S^1) \subset Y$ as shown in Figure 4 by 
\begin{align*}
s'_{10}(x)=&\left\{
\begin{array}{lll}
(6x, 1, 0)&\in D^2\times I\times S^1&(||x||\leq\frac{1}{6}),\\
(2-6||x||, \frac{x}{||x||}, \frac{2}{3}, 0)&\in I\times S^1\times I\times S^1 &(\frac{1}{6}\leq||x||\leq \frac{1}{3}),\\
(0, 1-||x||, \frac{x}{||x||}, 0)&\in I\times S^1\times I\times S^1& (\frac{1}{3}\leq ||x|| \leq 1),
\end{array}\right.\\
s'_{11}(x, t)=&\left\{
\begin{array}{lll}
(\frac{3}{2}x, 0, 0)&\in D^2\times I\times S^1 &(||x||\leq\frac{2}{3}),\\
(1, \frac{x}{||x||}, 1-||x||, 0)&\in I\times S^1\times I\times S^1 &(\frac{2}{3}\leq ||x|| \leq 1).
\end{array}\right.
\end{align*}
Then, we have $[\partial s'_{1i}]=[j_if_1]\in H_1(Y_1\cap Y_2;\mathbf{Q})$.

Define $s'_i=ps'_{0i}+qs'_{1i}$, then it follows that
\[
[\partial s'_i]=[j_i(p(f_3-f_4)+qf_1)]\in H_1(Y_1\cap Y_2;\mathbf{Q}),
\]
so that we have $[\partial(j_is-s'_i)]=0 \in H_1(Y_1\cap Y_2;\mathbf{Q})$.

We see
\[
\begin{array}{ccccc}
H_2(Y;\mathbf{Q})&\to&H_1(Y_1\cap Y_2;\mathbf{Q})&\to&H_1(\partial X';\mathbf{Q}), \\
{}[j_i s-s'_i]&\mapsto& \partial_*[j_i s-s'_i]&\mapsto &p(e_3-e_4)+qe_1
\end{array}
\]
so that we can choose $[j_i s-s'_i]$ as an element of the inverse image of $p(e_3-e_4)+qe_1$.

Now we calculate the intersection form of $H_2(Y;\mathbf{Q})$. Define $X''_1=j_1(X)\cup (D^2\times 0\times S^1) \subset (I\times X)\cup (D^2\times I\times S^1)\subset Y$, then $X''_1$ is deformation retract of $Y$. Hence, every element of $H_2(Y;\mathbf{Q})$ is represented by a cocycle in $X''_1$. Therefore, a cohomology class is included in the annihilator of intersection form in $H_2(Y;\mathbf{Q})$ if it is represented by a cocycle which have no common point with $X''_1$. We see 
\[j_0(X')\cap X_1''=\emptyset \text{, and }\ \iota_Y(\Sigma_{g+1})\cap X''_1=\emptyset,\]
so that $\mathbf{Q}(e_1+e_2)$ and $j_{0*}H_2(X';\mathbf{Q})$ are included in the annihilator of intersection form in $H_2(Y;\mathbf{Q})$.

To describe the signature of $Y$, it suffices to calculate the self-intersection number of $[j_i s-s'_i]=p(e_3-e_4)+qe_1$.
The cocycle $j_is-s'_i$ satisfies
\begin{gather*}
\Im(j_0s)\cap(\Im(j_1s)\cup\Im(s'_{01})\cup\Im(s'_{11}))=\emptyset\\
\Im(s'_{00})\cap(\Im(j_1s)\cup\Im(s'_{01}))=\emptyset\\
\Im(s'_{10})\cap(\Im(j_1s)\cup\Im(s'_{01})\cup\Im(s'_{11}))=\emptyset,
\end{gather*}
so that 
\begin{align*}
(j_0s-s'_0)\cdot(j_1s-s'_1)
&=(j_0s-(ps'_{00}+qs'_{10}))\cdot(j_1s-(ps'_{01}+qs'_{11}))\\
&=ps'_{00}\cdot qs'_{11}.
\end{align*}
We can see $s'_{00}$ and $s'_{11}$ intersect only once positively. Hence, $\Sign(Y)=\Sign(pq)=\Sign(m(\varphi))$.
\end{proof}

\subsection{Wall's Non-additivity Formula}
Wall derives the Novikov additivity for a more general case: two compact oriented  smooth $4k$-manfolds are glued along a common submanifolds, which itself have boundary, of the boundaries of the original manifolds.

We will give the specific case of his formula for $k=1$:

Let $Z$ be a closed oriented smooth 2-manifold, $X_-$, $X_0$, $X_+$ compact oriented smooth 3-manifolds with the boundaries $\partial X_-=\partial X_0=\partial X_+=Z$, and $Y_-$, $Y_+$ compact oriented smooth 4-manifolds with the boundaries $\partial Y_-=X_-\cup_{Z}(-X_0)$, $\partial Y_+=X_0\cup_{Z}(-X_+)$. Here we denote by $M\cup_{B} (-N)$ the union of two manifolds $M$ and $N$ glued by orientation reversing diffeomorphism of their common boundaries $\partial M=\partial N=B$. Let $Y=Y_-\cup_{X_0} Y_+$ be the union of $Y_-$ and $Y_+$ glued along submanifolds $X_0$ of their boundaries. Suppose $Y$ is oriented by the induced orientation of $Y_-$ and $Y_+$.

Write $V=H_1(Z;\mathbf{R})$; let $A$, $B$, and $C$ be the kernels of the maps on first homology induce by the inclusions of $Z$ in $X_-$, $X_0$ and $X_+$ respectively.

We define 
\[
W:=\frac{B\cap(C+A)}{(B\cap C)+(B\cap A)},
\]
and a bilinear form $\Psi$ by
\[
\begin{array}{cccccc}
\Psi:&W&\times&W&\to&\mathbf{R}.\\
&(b&,&b')&\mapsto&b\cdot c'
\end{array}
\]
Here $c'$ is a element which satisfies $a'+b'+c'=0$, and $b\cdot c'$ denote the intersection product of $b$ and $c'$.

Then $\Psi$ is independent of $c'$ and well-defined on $W$. Denote the signature of the bilinear form $\Psi$ by $\Sign(V;BCA)$ and the signature of the compact oriented 4-manifold $M$ by $\Sign M$. We are now ready to state the formula. 
\begin{theorem}[Wall\cite{wall1969nas}]{\label{theorem:theorem1}}
$\Sign Y=\Sign Y_-+\Sign Y_+-\Sign(V;BCA)$.
\end{theorem}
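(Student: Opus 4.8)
The plan is to read off $\Sign Y$ from the intersection form on $H_2(Y;\mathbf{R})$ and to isolate the correction by comparing this form with the forms on $Y_-$ and $Y_+$ separately. Throughout I work modulo the radical, so that signature refers to the induced nondegenerate form on $H_2(-;\mathbf{R})/\mathrm{rad}$. The starting observation is that the self-annihilator fact recorded at the beginning of Subsection~2.1, applied in turn to the three compact oriented $3$-manifolds $X_-$, $X_0$, $X_+$ with common boundary $Z$, shows that $A$, $B$, and $C$ are Lagrangian subspaces of the symplectic vector space $(V,\cdot)=(H_1(Z;\mathbf{R}),\text{intersection form})$; in particular $\dim A=\dim B=\dim C=\tfrac12\dim V$. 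Thus $\Sign(V;BCA)=\Sign\Psi$ is a purely symplectic-linear invariant of a triple of Lagrangians, and the assertion is that the defect of Novikov additivity equals exactly this invariant.

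First I would describe $2$-cycles of $Y$ by cutting along the gluing region $X_0$: every class in $H_2(Y)$ is obtained by matching a relative chain in $(Y_-,X_0)$ with a relative chain in $(Y_+,X_0)$ whose boundaries agree as a $1$-cycle in $X_0$. I would make this precise through the Mayer--Vietoris sequence of $Y=Y_-\cup_{X_0}Y_+$,
\[
H_2(X_0)\to H_2(Y_-)\oplus H_2(Y_+)\to H_2(Y)\xrightarrow{\ \partial\ }H_1(X_0)\to H_1(Y_-)\oplus H_1(Y_+),
\]
together with the long exact sequences of the pairs $(Y_\pm,X_0)$. The image of $H_2(Y_-)\oplus H_2(Y_+)$ in $H_2(Y)$ consists of classes representable away from $X_0$; since such classes can be pushed into the disjoint open sets $Y_-\setminus X_0$ and $Y_+\setminus X_0$, the mutual intersection numbers across the two summands vanish and this image carries the orthogonal block form of signature $\Sign Y_-+\Sign Y_+$. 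The remaining classes are those with nonzero image under $\partial\colon H_2(Y)\to H_1(X_0)$, and the whole correction is concentrated there.

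The core construction is to produce, from each $b'\in B\cap(C+A)$, a distinguished $2$-cycle $\sigma(b')$ in $Y$ whose self-pairings reproduce $\Psi$. Writing $b'=-a'-c'$ with $a'\in A$ and $c'\in C$, the classes $a'$, $b'$, $c'$ bound surfaces in $X_-$, $X_0$, $X_+$ respectively (because $A$, $B$, $C$ are the respective kernels), while the relation $a'+b'+c'=0$ in $H_1(Z)$ lets me cap off the three boundary curves by a single $2$-chain in $Z$. Assembling these pieces across $X_-\subset\partial Y_-$, the gluing region $X_0$, and $X_+\subset\partial Y_+$ yields a closed $2$-cycle $\sigma(b')$ in $Y$ whose image under $\partial$ is represented in $Z\subset X_0$. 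I would then verify that $b'\mapsto[\sigma(b')]$ descends to a map on $W=\bigl(B\cap(C+A)\bigr)/\bigl((B\cap C)+(B\cap A)\bigr)$, the point being that representatives of $B\cap C$ and $B\cap A$ yield cycles pushable entirely into $Y_+$ or $Y_-$, hence already accounted for or lying in the radical. The intersection pairing of $\sigma(b)$ with $\sigma(b')$ then recovers $b\cdot c'$, and hence $\Psi$, up to the global sign imposed by the induced orientation on $Y$. The well-definedness of $\Psi$ on $W$ — its independence of the auxiliary choice of $c'$, its independence of the lifts of $b,b'$ modulo $(B\cap C)+(B\cap A)$, and its symmetry — follows formally from the Lagrangian property of $A$, $B$, $C$, which forces the relevant correction terms to pair to zero.

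The main obstacle is the final bookkeeping: proving that these three families — interior classes of $Y_-$, interior classes of $Y_+$, and the correction cycles $\sigma(b')$ — together span $H_2(Y;\mathbf{R})$ modulo the radical, that the correction family is orthogonal (modulo radical) to both interior families so that the total form is genuinely block-triangular, and that $(B\cap C)+(B\cap A)$ is precisely the subspace collapsing into the radical of $\Psi$. Establishing these three points requires a careful diagram chase combining the Mayer--Vietoris sequence above with the long exact sequences of $(Y_\pm,X_0)$ and with Poincar\'e--Lefschetz duality for the $Y_\pm$. Granting this decomposition, the signature is additive over the blocks and one reads off
\[
\Sign Y=\Sign Y_-+\Sign Y_+-\Sign\Psi=\Sign Y_-+\Sign Y_+-\Sign(V;BCA),
\]
where the minus sign of the correction is dictated by the orientation conventions on $Y_-$, $Y_+$, and the induced orientation on $Y$.
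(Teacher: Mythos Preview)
The paper does not prove this theorem. It is quoted from Wall~\cite{wall1969nas} as a black box and then applied in Subsection~3.3; there is no argument in the paper to compare your proposal against.

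As for your sketch itself, it follows the outline of Wall's original argument: the Lagrangian property of $A,B,C$ from Poincar\'e--Lefschetz duality, the Mayer--Vietoris decomposition of $H_2(Y;\mathbf{R})$, and the construction of correction cycles $\sigma(b')$ from elements of $B\cap(C+A)$. You correctly flag the ``final bookkeeping'' as the main obstacle, and that is honest: showing that the three families span modulo radical, are mutually orthogonal modulo radical, and that exactly $(B\cap C)+(B\cap A)$ collapses is where the real work sits in Wall's paper, and your sketch does not supply those arguments. One point to be more careful about is the computation of $[\sigma(b)]\cdot[\sigma(b')]$: asserting that it ``recovers $b\cdot c'$'' requires an actual transversality or collar argument localizing the intersection to $Z$, and the sign depends on orientation conventions you have not fixed. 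So the proposal is a correct roadmap rather than a proof, and in any case goes well beyond what the paper itself provides.
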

\subsection{The differences of signature $\Sign E_{g}-\Sign E_{g,2}$ and $\Sign E_{g+1}-\Sign E_{g,2}$}
In this subsection, we calculate the difference of signature associated with sewing the trivial Disk bundles onto the $\Sigma_{g,2}$ bundle.

In Introduction, we defined $E_{g,r}^{\varphi,\psi}$ as a oriented $\Sigma_{g,r}$ bundle on $P$ which has monodromies $\varphi, \psi, (\psi\varphi)^{-1}\in\mathcal{M}_{g,r}$ along $\alpha, \beta, \gamma\in \pi_1(P)$.
If we fix $\varphi, \psi\in\mathcal{M}_{g,2}$, we denote simply   
\[
E_{g,2}:=E_{g,2}^{\varphi,\psi}, \hspace{1em}E_g:=E_{g}^{\theta(\varphi),\theta(\psi)}, \text{ and }\hspace{1em}E_{g+1}:=E_{g+1}^{\eta(\varphi),\eta(\psi)} \hspace{1em}(g\ge0).
\]
\begin{proposition}\label{proposition3.3}
$\Sign(E_g)-\Sign(E_{g,2})=-\Sign(m(\varphi)+m(\psi)+m((\varphi\psi)^{-1}))$
\quad $(g\ge0)$
\end{proposition}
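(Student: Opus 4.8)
The plan is to realize $E_g$ as a gluing and apply Wall's non-additivity formula (Theorem \ref{theorem:theorem1}). Write $E_g=E_{g,2}\cup_{X_0}\bigl(P\times\amalg_{i=1}^2 D^2\bigr)$, where the two pieces are glued along the \emph{vertical} boundary $X_0:=(S_1\amalg S_2)\times P$ of the $\Sigma_{g,2}$-bundle (the $\partial\Sigma_{g,2}$-subbundle). Set $Y_-:=E_{g,2}$ and $Y_+:=P\times\amalg_{i=1}^2 D^2$; the relevant closed surface is $Z:=\partial X_0=(S_1\amalg S_2)\times\partial P$, a disjoint union of six tori. Since $P\times\amalg_{i=1}^2 D^2$ deformation retracts onto $P$, its intersection form is trivial and $\Sign\bigl(P\times\amalg_{i=1}^2 D^2\bigr)=0$; Wall's formula then reduces the proposition to
\[
\Sign(E_g)-\Sign(E_{g,2})=-\Sign(V;BCA),\qquad V:=H_1(Z;\mathbf{R}).
\]
All the content lies in evaluating the correction term $\Sign(V;BCA)$.

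Next I would identify the three Lagrangians. Writing $\partial P=c_1\amalg c_2\amalg c_3$ and choosing on each torus $S_k\times c_i$ the basis $s_{k,i}:=[S_k\times pt]$, $t_{k,i}:=[pt\times c_i]$, the space $V$ becomes a sum of six hyperbolic planes. The complementary piece $X_-$ of $\partial Y_-$ is the disjoint union of the three mapping tori $X_{g,2}^{\varphi_1},X_{g,2}^{\varphi_2},X_{g,2}^{\varphi_3}$ with $\varphi_1=\varphi$, $\varphi_2=\psi$, $\varphi_3=(\psi\varphi)^{-1}$, so $A:=\Ker(V\to H_1(X_-))$ is the direct sum over $i$ of the kernels computed in Subsection \ref{subsection2.1}, namely $A=\bigoplus_i\bigl[\mathbf{R}(s_{1,i}+s_{2,i})\oplus\mathbf{R}(p_i(t_{1,i}-t_{2,i})+q_i s_{1,i})\bigr]$ with $m(\varphi_i)=[p_i:q_i]$ (using Lemma 2.2 to record $m(\varphi_3)=m((\varphi\psi)^{-1})$). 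A direct computation from $H_1(P\times S_k)=H_1(P)\oplus H_1(S_k)$ together with $[c_1]+[c_2]+[c_3]=0$ gives $B:=\Ker(V\to H_1(X_0))$, and the solid-torus structure of $X_+=\partial P\times\amalg_{i=1}^2 D^2$ (each meridian $s_{k,i}$ dies) gives $C:=\Ker(V\to H_1(X_+))=\bigoplus_{k,i}\mathbf{R}\,s_{k,i}$. By the half-lives-half-dies fact at the beginning of Subsection \ref{subsection2.1}, each of $A,B,C$ is Lagrangian of dimension $6$.

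Then I would compute $W$ and $\Psi$. In the generic case $p_i\neq 0$ for all $i$, one finds $A+C=C\oplus\bigoplus_i\mathbf{R}(t_{1,i}-t_{2,i})$, and intersecting with $B$ (whose $t$-part is spanned by $\sum_i t_{k,i}$) shows that $B\cap(A+C)$ is spanned by the $s$-part of $B$ together with $w:=\sum_i(t_{1,i}-t_{2,i})$, while $(B\cap A)+(B\cap C)$ is exactly the $s$-part of $B$. Hence $W$ is one-dimensional, generated by $w$. Writing $t_{1,i}-t_{2,i}=\tfrac{1}{p_i}\bigl(p_i(t_{1,i}-t_{2,i})+q_i s_{1,i}\bigr)-\tfrac{q_i}{p_i}s_{1,i}$ exhibits the decomposition of $w$ into its $A$- and $C$-components, and evaluating the intersection pairing yields $\Psi(w,w)=\pm\sum_i q_i/p_i$. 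Since the monoid addition gives $\Sign\bigl(m(\varphi)+m(\psi)+m((\varphi\psi)^{-1})\bigr)=\sign\bigl(\sum_i q_i/p_i\bigr)$, this matches the claim up to the global sign.

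The main obstacle will be the sign and orientation bookkeeping: one must fix the orientation of each torus $S_k\times c_i$ (inherited from the orientations of $P$ and the fibers) so that the intersection numbers $t_{k,i}\cdot s_{k,i}$ carry a uniform sign, and then check that this sign combines with the sign in Wall's formula to produce exactly $-\Sign\bigl(m(\varphi)+m(\psi)+m((\varphi\psi)^{-1})\bigr)$ rather than its negative. The second delicate point is the degenerate cases where some $p_i=0$, i.e.\ $m(\varphi_i)=[0:1]$: there the direction $t_{1,i}-t_{2,i}$ is absent from $A+C$, so $W$ collapses and $\Sign(V;BCA)=0$, in agreement with the monoid sum then being of the form $[0:\ast]$ of vanishing sign. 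Verifying these cases uniformly is the only real labor; the homological identifications of $A$, $B$, and $C$ are immediate from Subsection \ref{subsection2.1}.
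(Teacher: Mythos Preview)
Your proposal is correct and follows essentially the same route as the paper's proof: apply Wall's non-additivity formula to the decomposition $E_g=E_{g,2}\cup\bigl(P\times\amalg_{i=1}^2 D^2\bigr)$, identify the three Lagrangians in $H_1(Z;\mathbf{R})$ (meridians from the disks, the $P$-kernel, and the mapping-torus kernels carrying the data $m(\varphi_i)$), observe that the trivial disk piece has vanishing signature, and reduce everything to the Maslov correction term, which is $\sign\bigl(\sum q_i/p_i\bigr)$ in the generic case and zero otherwise. The only cosmetic difference is that you take $Y_-=E_{g,2}$ and $Y_+=P\times\amalg D^2$ while the paper uses the opposite labeling (so your $A$ and $C$ are the paper's $C$ and $A$); the paper then writes out every degenerate subcase and pins down the global sign explicitly, which is exactly the bookkeeping you correctly flag as the remaining work.
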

\begin{proof}
$E_g$ is the union of $E_{g,2}$ and $E_D:=(D^2\amalg D^2)\times P$ glued along their boundaries. Using Non-additivity formula Theorem \ref{theorem:theorem1}, 
we calculate $\Sign(E_{g})-\Sign(E_{g,2})$.

Define $Y_-$, $Y_+$, $X_-$, $X_0$, $X_+$, and  $Z$ by
\begin{gather*}
Y_-:=(\amalg_{j=1}^2D^2)\times P,\quad Y_+:=E_{g,2},\ \\
X_-:=(\amalg_{j=1}^2D^2)\times \partial P,\quad X_+:=E_{g,2}|_{\partial P},\quad X_0:=(\amalg_{j=1}^2\partial D^2)\times P,\\
\text{and}\ Z:=(\amalg_{j=1}^2\partial D^2)\times \partial P, \quad\text{respectively}.
\end{gather*}
Here, by the notation stated in subsection \ref{subsection2.1}, 
\[
X_+=E_{g,2}|_{\partial P}\cong X^\varphi\amalg X^\psi\amalg X^{(\psi\varphi)^{-1}},\quad
Z\cong \partial X^\varphi\amalg \partial X^\psi\amalg \partial X^{(\psi\varphi)^{-1}}.
\]
Define $V$, $A$, $B$, and $C$ as stated in subsection 3.1.

Since $X^\varphi=X^\psi=X^{(\psi\varphi)^{-1}}=S^1\times S^1$, we can choose the base of $H_1(\partial X^\varphi;\mathbf{R})$, $H_1(\partial X^\psi;\mathbf{R})$, and $H_1(\partial X^{(\psi\varphi)^{-1}};\mathbf{R})$ as in section \ref{subsection2.1}. Denote their base by $\{e_{11}, e_{12}, e_{13}, e_{14}\}$, $\{e_{21}, e_{22}, e_{23}, e_{24}\}$, $\{e_{31}, e_{32}, e_{33},  e_{34}\}$ respectively. 

Since $Z=\partial X^\varphi\amalg \partial X^\psi\amalg \partial X^{(\psi\varphi)^{-1}}$, we think of $e_{ij}$ as an element of $H_1(Z;\mathbf{R})$. 

Denote $m(\varphi)=[a_1:b_1]$, $m(\psi)=[a_2:b_2]$, and $m((\psi\varphi)^{-1})=[a_3:b_3]$ respectively, then 
\begin{align*}
V&=H_1(Z,\mathbf{R})=\bigoplus_{i=1}^3\bigoplus_{j=1}^4\mathbf{R}e_{ij},\\
A&= \mathbf{R}e_{11}\oplus\mathbf{R}e_{21}\oplus\mathbf{R}e_{31}
\oplus\mathbf{R}e_{12}\oplus\mathbf{R}e_{22}\oplus\mathbf{R}e_{32},\\
B&=\mathbf{R}(e_{11}-e_{21})\oplus\mathbf{R}(e_{11}-e_{31})
\oplus\mathbf{R}(e_{12}-e_{22})\oplus\mathbf{R}(e_{12}-e_{32})\\
&\hspace{1em}\oplus\mathbf{R}(e_{13}+e_{23}+e_{33})
\oplus\mathbf{R}(e_{14}+e_{24}+e_{34}),\\
C&=\bigoplus_{i=1}^3\left\{
\begin{array}{ll}
\mathbf{R}(e_{i1}+e_{i2})\oplus\mathbf{R}(e_{i3}-e_{i4}+m_ie_{i1})&\text{if } a_i\ne0\\
\mathbf{R}e_{i1}\oplus\mathbf{R}e_{i2}& \text{if } a_i=0.
\end{array}
\right. \hspace{1em} \textrm{Here we denote } m_i:=\frac{b_i}{a_i}.
\end{align*}
Hence, 
\begin{align*}
B\cap A&=\mathbf{R}(e_{11}-e_{21})\oplus\mathbf{R}(e_{12}-e_{22})\oplus\mathbf{R}(e_{11}-e_{31})\oplus\mathbf{R}(e_{12}-e_{32}),\\
B\cap C&=\left\{
\begin{array}{ll}
\mathbf{R}(e_{11}-e_{21}+e_{12}-e_{22})&\\
\oplus\,\mathbf{R}(e_{11}-e_{31}+e_{12}-e_{32})&\text{if } a_i\ne0\hspace{1em}\text{for }i=1,2,3\\
\oplus\,\mathbf{R}(e_{13}+e_{23}+e_{33}-e_{14}-e_{24}-e_{34}+m_1e_{11}+m_2e_{21}+m_3e_{31})&
\text{and }m_1+m_2+m_3=0,  \\[5pt]
\mathbf{R}(e_{11}-e_{21}+e_{12}-e_{22})& \text{if }a_i\ne0\hspace{1em}\text{for } i=1,2,3 \\
\oplus\,\mathbf{R}(e_{11}-e_{31}+e_{12}-e_{32})&\text{and } m_1+m_2+m_3\ne0,\\[5pt]
\mathbf{R}(e_{11}-e_{21}+e_{12}-e_{22})\\
\oplus\,\mathbf{R}(e_{11}-e_{31}+e_{12}-e_{32})&\text{if }a_1=0, a_2\ne0, a_3\ne0,\\[5pt]
\mathbf{R}(e_{11}-e_{21})\oplus\mathbf{R}(e_{12}-e_{22})\\
\oplus\,\mathbf{R}(e_{11}-e_{31}+e_{12}-e_{32})&\text{if }a_1=a_2=0, a_3\ne0,\\[5pt]
\mathbf{R}(e_{11}-e_{21})\oplus\mathbf{R}(e_{12}-e_{22})\\
\oplus\,\mathbf{R}(e_{11}-e_{31})\oplus\mathbf{R}(e_{12}-e_{32})&\text{if }a_i=0\hspace{1em}\text{for } i=1,2,3,
\end{array}
\right.\\
B\cap (C+A)&=\left\{
\begin{array}{ll}
\mathbf{R}(e_{11}-e_{21})\oplus\mathbf{R}(e_{12}-e_{22})&\\
\oplus\,\mathbf{R}(e_{11}-e_{31})\oplus\mathbf{R}(e_{12}-e_{32})&\\
\oplus\,\mathbf{R}(e_{13}+e_{23}+e_{33}-e_{14}-e_{24}-e_{34})&\hspace{4cm}\,\text{if }a_i\ne0\hspace{1em}\text{for }i=1,2,3, \\[5pt]
\mathbf{R}(e_{11}-e_{21})\oplus\mathbf{R}(e_{12}-e_{22})&\\
\oplus\,\mathbf{R}(e_{11}-e_{31})\oplus\mathbf{R}(e_{12}-e_{32}),
&\hspace{4cm}\,\textrm{otherwise}.
\end{array}
\right.
\end{align*}
By computing the signature of $\Psi$, we have 
\begin{align*}
\Sign(V;BCA)=\left\{
\begin{array}{lr}
\Sign(m_1+m_2+m_3) &\text{if }a_i\ne0 \hspace{1em} \text{for }i=1,2,3,\\[5pt]
0&\textrm{otherwise}.
\end{array}
\right.
\end{align*}
Hence, 
\begin{align*}
\Sign(V;BCA)&=\Sign(m(\varphi)+m(\psi)+m((\psi\varphi)^{-1}))\\
&=\Sign(m(\varphi)+m(\psi)+m((\varphi\psi)^{-1})).
\end{align*}
By Non-additivity formula, we have 
\[
\Sign(E_{g})=\Sign(E_D)+\Sign(E_{g,2})-\Sign(V;BCA).
\]
Since $E_D$ is a trivial bundle $(D^2\amalg D^2)\times P$, we have $\Sign(E_D)=0$.

This completes the proof of the proposition.
\end{proof}
By the theorem and Proposition \ref{proposition3.3}, we can calculate the difference of signature $\Sign(E_{g})-\Sign(E_{g,2})$. 
\begin{corollary}
For $g\ge0$,
\begin{multline*}
\Sign(E_{g+1})-\Sign(E_{g,2})=\Sign(m(a))+\Sign(m(b))+\Sign(m((ab)^{-1}))\\
-\Sign(m(a)+m(b)+m((ab)^{-1})).
\end{multline*}
\end{corollary}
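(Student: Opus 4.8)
The plan is to reach $\Sign(E_{g+1})-\Sign(E_{g,2})$ by inserting the intermediate bundle $E_g$ and telescoping. This reduces the corollary to a purely formal combination of two identities already established in this section, so no further manifold surgery or fresh Wall computation is needed; the substantive work is entirely contained in the Main Theorem (Theorem \ref{theorem3.1}) and in Proposition \ref{proposition3.3}. Accordingly, the ``theorem'' referred to just before the statement should be read as the Main Theorem.

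First I would split
\[
\Sign(E_{g+1})-\Sign(E_{g,2})=\bigl(\Sign(E_{g+1})-\Sign(E_g)\bigr)+\bigl(\Sign(E_g)-\Sign(E_{g,2})\bigr).
\]
For the first summand I invoke Theorem \ref{theorem3.1}: recalling from the proof of Lemma \ref{lemma3.3} and the definition of the Meyer cocycle that $\Sign(E_{g+1})=\eta^*\tau_{g+1}(\varphi,\psi)$ and $\Sign(E_g)=\theta^*\tau_g(\varphi,\psi)$, this summand is exactly $\tilde{\tau}_g(\varphi,\psi)$, which by the Main Theorem equals $\sign(m(\varphi))+\sign(m(\psi))+\sign(m((\varphi\psi)^{-1}))$. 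For the second summand I quote Proposition \ref{proposition3.3} directly, obtaining $-\Sign(m(\varphi)+m(\psi)+m((\varphi\psi)^{-1}))$. Adding the two contributions gives the claimed identity, once the symbols $a,b$ in the statement are read as $\varphi,\psi$ and $\Sign$ of an element of $\mathbf{QP}^1$ is interpreted as $\sign$.

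Since both input identities hold for every $g\ge0$, the corollary inherits the same range with no genus restriction. There is essentially no obstacle here: the only point needing care is bookkeeping, namely keeping the two distinct normalizations on the right-hand side separate — the three individual terms $\sign(m(\cdot))$ produced by the Main Theorem versus the single joint term $\Sign(m(\varphi)+m(\psi)+m((\varphi\psi)^{-1}))$ produced by the Wall correction in Proposition \ref{proposition3.3} — and transcribing them without attempting any (generally invalid) cancellation of $\sign(x)+\sign(y)+\sign(z)$ against $\sign(x+y+z)$. It is precisely the failure of such a cancellation that forces the answer into its two-part form.
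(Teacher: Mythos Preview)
Your proof is correct and matches the paper's approach exactly: the paper states the corollary as an immediate consequence of the Main Theorem (Theorem~\ref{theorem3.1}) and Proposition~\ref{proposition3.3}, obtained by telescoping through $\Sign(E_g)$, just as you do.
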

\bibliographystyle{amsplain}
\bibliography{first.bib} 

\providecommand{\bysame}{\leavevmode\hbox to3em{\hrulefill}\thinspace}
\providecommand{\MR}{\relax\ifhmode\unskip\space\fi MR }
\providecommand{\MRhref}[2]{%
  \href{http://www.ams.org/mathscinet-getitem?mr=#1}{#2}
}
\providecommand{\href}[2]{#2}
\begin{thebibliography}{10}

\bibitem{atiyah1987ldeta}
M.~Atiyah, \emph{{The logarithm of the dedekind $\eta$-function}},
  Mathematische Annalen \textbf{278} (1987), no.~1, 335--380.

\bibitem{endo2000mss}
H.~Endo, \emph{{Meyer's signature cocycle and hyperelliptic fibrations}},
  Mathematische Annalen \textbf{316} (2000), no.~2, 237--257.

\bibitem{harer1983shg}
J.~L. Harer, \emph{{The second homology group of the mapping class group of an
  orientable surface}}, Inventiones Mathematicae \textbf{72} (1983), no.~2,
  221--239.

\bibitem{harer1985shm}
\bysame, \emph{{Stability of the Homology of the Mapping Class Groups of
  Orientable Surfaces}}, The Annals of Mathematics \textbf{121} (1985), no.~2,
  215--249.

\bibitem{iida2004ale}
S.~Iida, \emph{{Adiabatic limit of $\eta$-invariants and the Meyer function of
  genus two}}, Master's thesis, University of Tokyo, (2004).

\bibitem{kasagawa2000fmc}
R.~Kasagawa, \emph{{On a function on the mapping class group of a surface of
  genus 2}}, Topology and its Applications \textbf{102} (2000), no.~3,
  219--237.

\bibitem{kawazumi1997gmm}
N.~Kawazumi, \emph{{A generalization of the Morita-Mumford classes to extended
  mapping class groups for surfaces}}, Inventiones Mathematicae \textbf{131}
  (1997), no.~1, 137--149.

\bibitem{korkmaz2003shg}
M.~Korkmaz and A.I. Stipsicz, \emph{{The second homology groups of mapping
  class groups of orientable surfaces}}, Mathematical Proceedings of the
  Cambridge Philosophical Society \textbf{134} (2003), no.~03, 479--489.

\bibitem{meyer1973sf}
W.~Meyer, \emph{{Die Signatur von Fl{\"a}chenb{\"u}ndeln}}, Mathematische
  Annalen \textbf{201} (1973), no.~3, 239--264.

\bibitem{morifuji2003msf}
T.~Morifuji, \emph{{On Meyer's function of hyperelliptic mapping class
  groups}}, J. Math. Soc. Japan \textbf{55} (2003), no.~1, 117--129.

\bibitem{powell1978ttm}
J.~Powell, \emph{{Two Theorems on the Mapping Class Group of a Surface}},
  Proceedings of the American Mathematical Society \textbf{68} (1978), no.~3,
  347--350.

\bibitem{wall1969nas}
CTC Wall, \emph{{Non-additivity of the signature}}, Inventiones Mathematicae
  \textbf{7} (1969), no.~3, 269--274.

\end{thebibliography}
\end{document}